\newtheorem{thm}{Theorem}[section]
\theoremstyle{definition}
\newtheorem{defn}{Definition}[section]
\newcommand{\scr}[1]{\mathscr #1}
\definecolor{wco}{rgb}{0.5,0.2,0.3}
\numberwithin{equation}{section} \theoremstyle{remark}
\newcommand{\ua}{\uparrow}
\title{{\bf  Regularities and Exponential Ergodicity in Entropy  for    SDEs Driven by  Distribution Dependent Noise}\footnote{ Supported in
 part by   the National Key R\&D Program of China (No. 2022YFA1006000, 2020YFA0712900) and   NNSFC (12271398, 11921001). }}
\author{
{\bf   Xing Huang,  Feng-Yu Wang   }\\
\footnotesize{ Center for Applied Mathematics, Tianjin
University, Tianjin 300072, China}\\
\footnotesize{  xinghuang@tju.edu.cn},\ \ 
\footnotesize{  wangfy@tju.edu.cn}
}
\begin{document}
\allowdisplaybreaks
\def\R{\mathbb R}  \def\ff{\frac} \def\ss{\sqrt} \def\B{\mathbf
B}
\def\N{\mathbb N} \def\kk{\kappa} \def\m{{\bf m}}
\def\ee{\varepsilon}\def\ddd{D^*}
\def\dd{\delta} \def\DD{\Delta} \def\vv{\varepsilon} \def\rr{\rho}
\def\<{\langle} \def\>{\rangle} \def\GG{\Gamma} \def\gg{\gamma}
  \def\nn{\nabla} \def\pp{\partial} \def\E{\mathbb E}
\def\d{\text{\rm{d}}} \def\bb{\beta} \def\aa{\alpha} \def\D{\scr D}
  \def\si{\sigma} \def\ess{\text{\rm{ess}}}
\def\beg{\begin} \def\beq{\begin{equation}}  \def\F{\scr F}
\def\Ric{\text{\rm{Ric}}} \def\Hess{\text{\rm{Hess}}}
\def\e{\text{\rm{e}}} \def\ua{\underline a} \def\OO{\Omega}  \def\oo{\omega}
 \def\tt{\tilde} \def\Ric{\text{\rm{Ric}}}
\def\cut{\text{\rm{cut}}} \def\P{\mathbb P} \def\ifn{I_n(f^{\bigotimes n})}
\def\C{\scr C}   \def\G{\scr G}   \def\aaa{\mathbf{r}}     \def\r{r}
\def\gap{\text{\rm{gap}}} \def\prr{\pi_{{\bf m},\varrho}}  \def\r{\mathbf r}
\def\Z{\mathbb Z} \def\vrr{\varrho} \def\ll{\lambda}
\def\L{\scr L}\def\Tt{\tt} \def\TT{\tt}\def\II{\mathbb I}
\def\i{{\rm in}}\def\Sect{{\rm Sect}}  \def\H{\mathbb H}
\def\M{\scr M}\def\Q{\mathbb Q} \def\texto{\text{o}} \def\LL{\Lambda}
\def\Rank{{\rm Rank}} \def\B{\scr B} \def\i{{\rm i}} \def\HR{\hat{\R}^d}
\def\to{\rightarrow}\def\l{\ell}\def\iint{\int}
\def\EE{\scr E}\def\no{\nonumber}
\def\A{\scr A}\def\V{\mathbb V}\def\osc{{\rm osc}}
\def\BB{\scr B}\def\Ent{{\rm Ent}}\def\3{\triangle}\def\H{\scr H}
\def\U{\scr U}\def\8{\infty}\def\1{\lesssim}\def\HH{\mathrm{H}}
 \def\T{\scr T}
 \def\R{\mathbb R}  \def\ff{\frac} \def\ss{\sqrt} \def\B{\mathbf
B} \def\W{\mathbb W}
\def\N{\mathbb N} \def\kk{\kappa} \def\m{{\bf m}}
\def\ee{\varepsilon}\def\ddd{D^*}
\def\dd{\delta} \def\DD{\Delta} \def\vv{\varepsilon} \def\rr{\rho}
\def\<{\langle} \def\>{\rangle} \def\GG{\Gamma} \def\gg{\gamma}
  \def\nn{\nabla} \def\pp{\partial} \def\E{\mathbb E}
\def\d{\text{\rm{d}}} \def\bb{\beta} \def\aa{\alpha} \def\D{\scr D}
  \def\si{\sigma} \def\ess{\text{\rm{ess}}}
\def\beg{\begin} \def\beq{\begin{equation}}  \def\F{\scr F}
\def\Ric{\text{\rm{Ric}}} \def\Hess{\text{\rm{Hess}}}
\def\e{\text{\rm{e}}} \def\ua{\underline a} \def\OO{\Omega}  \def\oo{\omega}
 \def\tt{\tilde} \def\Ric{\text{\rm{Ric}}}
\def\cut{\text{\rm{cut}}} \def\P{\mathbb P} \def\ifn{I_n(f^{\bigotimes n})}
\def\C{\scr C}      \def\aaa{\mathbf{r}}     \def\r{r}
\def\gap{\text{\rm{gap}}} \def\prr{\pi_{{\bf m},\varrho}}  \def\r{\mathbf r}
\def\Z{\mathbb Z} \def\vrr{\varrho} \def\ll{\lambda}
\def\L{\scr L}\def\Tt{\tt} \def\TT{\tt}\def\II{\mathbb I}
\def\i{{\rm in}}\def\Sect{{\rm Sect}}  \def\H{\mathbb H}
\def\M{\scr M}\def\Q{\mathbb Q} \def\texto{\text{o}} \def\LL{\Lambda}
\def\Rank{{\rm Rank}} \def\B{\scr B} \def\i{{\rm i}} \def\HR{\hat{\R}^d}
\def\to{\rightarrow}\def\l{\ell}\def\iint{\int}
\def\EE{\scr E}\def\Cut{{\rm Cut}}
\def\A{\scr A} \def\Lip{{\rm Lip}}
\def\BB{\scr B}\def\Ent{{\rm Ent}}\def\L{\scr L}
\def\R{\mathbb R}  \def\ff{\frac} \def\ss{\sqrt} \def\B{\mathbf
B}
\def\N{\mathbb N} \def\kk{\kappa} \def\m{{\bf m}}
\def\dd{\delta} \def\DD{\Delta} \def\vv{\varepsilon} \def\rr{\rho}
\def\<{\langle} \def\>{\rangle} \def\GG{\Gamma} \def\gg{\gamma}
  \def\nn{\nabla} \def\pp{\partial} \def\E{\mathbb E}
\def\d{\text{\rm{d}}} \def\bb{\beta} \def\aa{\alpha} \def\D{\scr D}
  \def\si{\sigma} \def\ess{\text{\rm{ess}}}
\def\beg{\begin} \def\beq{\begin{equation}}  \def\F{\scr F}
\def\Ric{\text{\rm{Ric}}} \def\Hess{\text{\rm{Hess}}}
\def\e{\text{\rm{e}}} \def\ua{\underline a} \def\OO{\Omega}  \def\oo{\omega}
 \def\tt{\tilde} \def\Ric{\text{\rm{Ric}}}
\def\cut{\text{\rm{cut}}} \def\P{\mathbb P} \def\ifn{I_n(f^{\bigotimes n})}
\def\C{\scr C}      \def\aaa{\mathbf{r}}     \def\r{r}
\def\gap{\text{\rm{gap}}} \def\prr{\pi_{{\bf m},\varrho}}  \def\r{\mathbf r}
\def\Z{\mathbb Z} \def\vrr{\varrho} \def\ll{\lambda}
\def\L{\scr L}\def\Tt{\tt} \def\TT{\tt}\def\II{\mathbb I}
\def\i{{\rm in}}\def\Sect{{\rm Sect}}  \def\H{\mathbb H}
\def\M{\scr M}\def\Q{\mathbb Q} \def\texto{\text{o}} \def\LL{\Lambda}
\def\Rank{{\rm Rank}} \def\B{\scr B} \def\i{{\rm i}} \def\HR{\hat{\R}^d}
\def\to{\rightarrow}\def\l{\ell}
\def\8{\infty}\def\I{1}\def\U{\scr U}\def\beq{\begin{equation}}
\maketitle

\begin{abstract}  As two crucial tools  characterizing    regularity properties  of stochastic systems,  the  log-Harnack inequality and Bismut formula  have been intensively studied for distribution dependent (McKean-Vlasov) SDEs. However, due to technical difficulties, existing results mainly focus on the case with distribution free noise.
In this paper,  we introduce  a noise decomposition argument to  establish the    log-Harnack inequality and Bismut formula for
  SDEs with distribution dependent noise, in both non-degenerate and degenerate situations.  As application, the exponential ergodicity in entropy  is investigated.

\end{abstract} \noindent
 AMS subject Classification:\  60H10, 60H15.   \\
\noindent
 Keywords: Distribution dependent SDE,   Log-Harnack inequality, Bismut formula, Exponential ergodicity in entropy.
 \vskip 2cm

\section{Introduction}
Let $\scr P(\R^d)$ be the space  of all  probability measures on $\R^d$ equipped with the weak topology.
Consider the following distribution dependent SDE on $\R^d$:
\beq\label{E1} \d X_t= b_t(X_t, \L_{X_t})\d t+  \si_t(X_t, \L_{X_t})\d B_t,\ \ t\in [0,T], \end{equation}
where  $T>0$ is a fixed time, $\L_{X_t}$ is the distribution of $X_t$,
$$b: [0,T]\times\R^d\times\scr P(\R^d)\to\R^d,\ \ \si: [0,T]\times\R^d\times\scr P(\R^d)  \to \R^d\otimes\R^d$$
are measurable,  and $B_t$ is a  $d$-dimensional Brownian motion on a complete filtration probability space $(\Omega,\F,\{\F_t\}_{t\in[0,T]},\P)$.

We investigate the regularity in initial distributions for  solutions to \eqref{E1}. More precisely, for $k> 1$ let
$$\scr P_k(\R^d):=\big\{\mu\in \scr P(\R^d):\ \|\mu\|_k:= \mu(|\cdot|^k)^{\ff 1 k}<\infty\big\},$$
which is a Polish space under the $L^k$-Wasserstein distance
$$\W_k(\mu,\nu)= \inf_{\pi\in \C(\mu,\nu)} \bigg(\int_{\R^{d}\times\R^{d}} |x-y|^k \pi(\d x,\d y)\bigg)^{\ff 1 {k}},\ \  \mu,\nu\in \scr P_k(\R^d), $$ where $\C(\mu,\nu)$ is the set of all couplings of $\mu$ and $\nu$.
When \eqref{E1} is well-posed for distributions in $\scr P_k(\R^d)$, i.e. for any $\F_0$-measurable initial value $X_0$  with $\L_{X_0}\in \scr P_k(\R^d)$ (correspondingly, any initial distribution $\mu\in \scr P_k(\R^d)$), the SDE \eqref{E1} has a unique solution (correspondingly, a unique weak solution) with $\L_{X_\cdot}\in C([0,T], \scr P_k(\R^d)),$  we consider the regularity of the maps
$$\scr P_k(\R^d)\ni \mu\mapsto P_t^*\mu:=\L_{X_t}\ \text{for}\ \L_{X_0}=\mu,\ \ t\in (0,T].$$
Since $P_t^*\mu$ is uniquely determined by
\beq\label{PT}  P_t f(\mu):=\int_{\R^d} f \d(P_t^*\mu),\ \ f\in \B_b(\R^d),\end{equation}
where $\B_b(\R^d)$ is the space of bounded measurable functions on $\R^d$, we study the regularity of functionals
$$\scr P_k(\R^d) \ni \mu \mapsto P_tf(\mu), \ \ \ t\in (0,T], f\in \B_b(\R^d).$$

When the noise is distribution free, i.e. $ \si_t(x, \mu)=\si_t(x)$ does not depend on the distribution argument $\mu$, the log-Harnack inequality
\beq\label{LH} P_t\log f(\mu)\le \log P_tf(\nu)+ \ff{c}t \W_2(\mu,\nu)^2,\ \ 0<f\in \B_b(\R^d), t\in (0,T], \mu,\nu\in \scr P_2(\R^d),\end{equation}
for some constant $c>0$ has been established in  \cite{HW18, HW22a, RW, FYW1}  under different conditions,   see also \cite{HRW19,HS} for extensions to the infinite-dimensional case.   A crucial application of this inequality  is that it is equivalent to the entropy-cost estimate
\begin{equation*} \Ent(P_t^*\nu|P_t^*\mu)\le \ff{c}t \W_2(\mu,\nu)^2,\ \ t\in (0,T], \mu,\nu\in \scr P_2(\R^d),\end{equation*}
where $\Ent(\nu|\mu)$ is the relative entropy of $\nu$ with respect to $\mu$.
With this estimate, the exponential ergodicity of $P_t^*$ in entropy is proved in \cite{RW} for a class of time-homogeneous distribution dependent SDEs.  The study of
\eqref{LH} goes back to \cite{W97,W10} where the family of  dimension-free Harnack inequalities is introduced, see \cite{Wbook} for various applications of this type inequalities.
We  emphasize that  arguments used   in the above mentioned references do not apply to distribution dependent noise.  The only known  log-Harnack inequality for distribution dependent noise
is established in \cite{BH} for  Ornstein-Ulenbeck type SDEs whose solutions are Gaussian processes and thus   easy to manage.

Another crucial tool characterizing the regularity of $\mu\mapsto P_t^*\mu$ is the following Bismut type formula for the intrinsic derivative $D^I$ in $\mu\in\scr P_k(\R^d)$ (see Definition 2.1 below):
\beq\label{BS} \beg{split}&D^I_\phi P_tf(\mu)= \E\bigg[ f(X_t^\mu) \int_0^t\<M_s^{\mu,\phi},\d B_s\>\bigg],\\
&\ \ \ \qquad t\in (0,T],\ f\in \B_b(\R^d), \phi\in L^{k}(\R^d\to\R^d;\mu),\end{split} \end{equation}
where   $\int_0^t\<M_s^{\mu,\phi},\d B_s\>$  is a martingale depending on $\mu$ and $\phi$ and $X_t^\mu$ solves \eqref{E1} from initial distribution $\mu$.

 Bismut formula was first established in \cite{Bim} for the  derivative formula of diffusion semigroups on Riemannian manifolds by using Malliavin calculus, which is also   called Bismut-Elworthy-Li formula due to \cite{EL}  where the martingale method is developed.
When $\si_t(x,\mu)=\si_t(x)$ is distribution free, this type formulas have been established in
\cite{BRW,BBP, HW22a,RW19,FYW3} under different conditions.

In the distribution dependent setting, Bismut  formula  is studied in    \cite{CM} for  the   decoupled SDEs  with fixed distribution parameter,  while \eqref{BS}    is derived in   \cite{B} for the Dirac measure  $\mu=\delta_x, x\in\R^d$.  An   implicit Bismut formula is  presented in \cite{Song,T} where the noise is allowed to be distribution dependent. So far, an explicit Bismut formula is still open for distribution dependent noise.   Nevertheless,   intrinsic  derivative estimates have been presented  for a class of SDEs with distribution dependent noise, see    \cite{HW21} and references therein.  This convinces us   of establishing the log-Harnack inequality and explicit Bismut formula  for  SDEs with distribution dependent noise.

 In this paper, we propose a noise decomposition argument which reduces the study of distribution dependent noise to  distribution free noise.  For simplicity, we only explain here the idea on establishing  the
 log-Harnack inequality for    the following distribution dependent SDE:
 \beq\label{EMV} \d X_t= b_t(X_t, \L_{X_t})\d t+  \si_t( \L_{X_t})\d B_t,\ \ t\in [0,T]. \end{equation}
Assume that $\si_t$ is bounded and Lipschitz continuous on $\scr P_2(\R^d)$, such that
 $$ (\si_t\si_t^*)(\gg)\ge  2\ll^2 I_d,\ \ \gg\in \scr P_2(\R^d) $$ holds  for some constant  $\ll>0$,
 where $I_d$ is the $d\times d$ identity matrix.  We take
 $$\tt\si_t(\gg):= \ss{(\si_t\si_t^*)(\gg)- \ll^2 I_d}.$$
Then $\tt\si_t(\gg)\ge \ll I_d$, and    \cite[Lemma 3.3]{PW} implies that $\tt\si_t(\gg)$ is Lipschitz continuous in $\gg\in\scr P_{2}(\R^d)$ as well.
Moreover,  for two independent $d$-dimensional Brownian motions $W_t$ and $\tt W_t$,
 $$\d B_t:= \si_t(\L_{X_t})^{-1} \big\{\ll \d W_t+ \tt \si_t(\L_{X_t})\d \tt W_t\big\}$$
 is a $d$-dimensional Brownian motion, so that \eqref{EMV} is reduced to
  \beq\label{E'} \d X_t= b_t(X_t, \L_{X_t})\d t+  \ll \d W_t+ \tt \si_t( \L_{X_t})\d \tt W_t,\ \ t\in [0,T]. \end{equation}
 Thus, by the well-posedness, \eqref{EMV} and \eqref{E'} provide the same operator $P_t$.
Now, consider the conditional probability $\P^{\tt W}$ given $\tt W$, under which
 $\int_0^t \tt \si_s( \L_{X_s})\d \tt W_s$ is deterministic so that  \eqref{E'} becomes an  SDE
 with constant noise $\ll \d W_t$, and hence its  log-Harnack inequality follows from  existing arguments developed for  distribution free noise.

  However, this noise decomposition argument is hard to extend to spatial-distribution dependent noise. So, in the following   we  only  consider
  \eqref{EMV} or \eqref{E'}, rather than \eqref{E1}.

Closely related to the log-Harnack inequality,   a very nice  entropy estimate has been derived in  \cite{BRS} for two SDEs with different noise coefficients. Consider, for instance, the following  SDEs on $\R^d$ for $i=1,2$:
$$\d X_t^i= b_i(t,X_t^i)\d t+ \ss{a_i(t)}\d B_t,\ \ X_0^i=x\in\R^d, t\ge 0,  $$
where   $a_i(t)$  is positive definite,  and for some constant $K>1$,
$$|b_i(t,x)-b_i(t,y)|\le K|x-y|,\ \ \ K^{-1} I_d\le a_i(t)\le K I_d,\ \ x,y\in\R^d,t\ge 0.$$
Then \cite[Theorem 1.1]{BRS} gives the entropy estimate
\beg{align*}&\Ent(\L_{X_t^2}|\L_{X_t^1})\le \ff 1 2 \int_0^t \d s\int_{\R^d}  \big|a_1(s)^{-\ff 1 2 } \Phi(s,y)\big|^2\rr_2(s,y)  \d y,\\
&\Phi(s,y):= (a_1(s)-a_2(s)) \nn\log \rr_2(s,y) +b_2(s,y)-b_1(s,y),\ \ s> 0, y\in\R^d,\end{align*}
where $\rr_2(s,y):=\ff{ \L_{X_s^2}(\d y)}{\d y}$ is the distribution density function of $X_s^2$.   Since for elliptic diffusion processes
$$\int_{\R^d}  \big|\nn\log \rr_2(s,y) \big|^2\rr_2(s,y) \d y $$ behaves like   $\ff c s$ for  some constant $c>0$ and  small $s>0$, to derive finite  entropy upper bound from this estimate one may assume
\beq\label{TY} \int_0^1 \ff{\|a_1(s)-a_2(s)\|^2}s\d s <\infty,\end{equation} where $\|\cdot\|$ is the operator norm of matrices.
To  bound $\Ent(P_t^*\nu|P_t^*\mu)$ for  \eqref{EMV},  we take
$$a_1(s):= (\si_s\si_s^*)(P_s^*\mu),\ \ a_2(s):= (\si_s\si_s^*)(P_s^*\nu).$$
 But   \eqref{TY} fails when   $\|(\si_s\si_s^*)(P_s^*\mu)-(\si_s\si_s^*)(P_s^*\nu)\|$ is uniformly positive     for small  $s.$

\

 The remainder of the paper is organized as follows.   In Section 2 and Section 3, we establish the log-Harnack inequality  and Bismut formula   for the non-degenerate case and degenerate cases respectively.  In Section 4 we apply the log-Harnack inequality    to study  the exponential ergodicity  in entropy.

\section{ Non-degenerate case}

In this part, we establish the log-Harnack inequality and Bismut formula for $P_tf$ defined in \eqref{PT}, where $P_t^*\mu:=\L_{X_t^\mu}$ for $X_t^\mu$ solving   \eqref{E'} with initial distribution $\mu$.

\subsection{Log-Harnack inequality}

To establish the  log-Harnack inequality, we make   the following assumption.
\begin{enumerate} \item[{\bf (A)}] $\ll >0$ is a constant,  and there exists   $0\le K\in  L^1([0,T])$  such that
 \begin{align*}
&|b_t(x,\mu)-b_t(y,\nu)|^2+ \|\tt\si_t (\mu)-\tt\si_t (\nu)\|^2 \le  K_t (|x-y|^2+\W_2(\mu, \nu)^2),\\
&|b_t(0,\delta_0)|+\|\tt\sigma_t(\delta_0)\|^2\leq K_t,\ \  \ t\in [0,T],\ x,y\in\R^d,\ \mu,\nu\in \scr P_2(\R^d).
\end{align*}
\end{enumerate}

By \cite[Theorems 2.1 and  3.3]{HRW} or \cite[Theorem 2.1]{FYW1}, assumption {\bf (A)} implies that  the  SDE \eqref{E'} is   well-posed  for distributions in $\scr P_2(\R^d),$ and there exists a constant $c>0$ such that
\begin{align}\label{WDS}\W_2(P_t^\ast \nu,P_t^\ast \mu)\leq c\W_2(\nu, \mu),\ \ \mu,\nu\in\scr P_2(\R^d), t\in[0,T].
\end{align}

\begin{thm}\label{Loh} Assume {\bf (A)} and let $P_t$ be defined in $\eqref{PT}$ for the SDE  $\eqref{E'}$.
Then there exists a constant $c>0$ such that
\begin{equation*}
P_t\log f(\nu) \le \log P_t f(\mu) +\ff c t \W_2(\mu,\nu)^2,\ \ 0<f\in \B_b(\R^d), \mu,\nu \in \scr P_2(\R^d), t\in (0,T]. \end{equation*}
Equivalently,
$$\Ent(P_t^*\nu|P_t^*\mu) \leq \frac{c}{t}\W_2(\mu,\nu)^2,\ \ \mu,\nu \in \scr P_2(\R^d), t\in (0,T].$$
 \end{thm}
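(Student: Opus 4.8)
The strategy is exactly the noise decomposition sketched in the introduction, combined with the coupling-by-change-of-measure argument for the log-Harnack inequality in the distribution-free setting. Fix $\mu,\nu\in\scr P_2(\R^d)$ and $t\in(0,T]$. I would work on the SDE \eqref{E'}, which by well-posedness produces the same $P_t^*$ as \eqref{EMV}. The first step is to condition on the second Brownian motion $\tt W$: under the regular conditional probability $\P^{\tt W}$, the process $t\mapsto \int_0^t\tt\si_s(\L_{X_s})\,\d\tt W_s$ is a fixed continuous path (note $\L_{X_s}=P_s^*\mu$ is deterministic, so this conditioning does not disturb the law of $X_\cdot$), and \eqref{E'} becomes a classical (distribution-free, in the appropriate decoupled sense) SDE with additive nondegenerate noise $\ll\,\d W_t$. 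More precisely, letting $X_\cdot^\mu$ and $X_\cdot^\nu$ solve \eqref{E'} from $\mu$ and $\nu$ respectively, I would set up the two decoupled equations driving them, recalling that the drift seen by $X_\cdot^\nu$ is $b_s(\cdot,P_s^*\nu)$ with the given fixed $\tt W$-path, and likewise for $X_\cdot^\mu$; both have the same constant diffusion $\ll I_d$.

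The second step is the coupling. On a common probability space I would take $X_0^\mu\sim\mu$, $X_0^\nu\sim\nu$ with $\E|X_0^\mu-X_0^\nu|^2=\W_2(\mu,\nu)^2$ (optimal coupling), run $X_\cdot^\nu$ as the reference process, and construct a coupled process $Y_\cdot$ that starts at $X_0^\mu$, is driven by a shifted Brownian motion $\d\tilde B_s=\d W_s+\ll^{-1}\xi_s\,\d s$ under a Girsanov-changed measure $\Q=R\,\P$, where the control $\xi_s$ is chosen so that $Y_s$ is pulled toward $X_s^\mu$'s law while $Y_t=X_t^\mu$ in law (or, following the HW18/Wang-book template, so that $Y_s-X_s^\nu\to 0$ by time $t$ with the standard linear-interpolation-plus-drift-correction choice $\xi_s$ that absorbs both the drift difference $b_s(Y_s,P_s^*\mu)-b_s(X_s^\nu,P_s^*\nu)$ and a term forcing $|Y_s-X_s^\nu|$ to vanish at rate $1/(t-s)$). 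The key Grönwall estimate here uses assumption {\bf (A)} together with \eqref{WDS}: the term $\W_2(P_s^*\mu,P_s^*\nu)^2\le c^2\W_2(\mu,\nu)^2$ is a nonrandom inhomogeneity, and $\int_0^t K_s\,\d s<\infty$, so one obtains $\E^{\tt W}\int_0^t|\xi_s|^2\d s\le \frac{C}{t}\W_2(\mu,\nu)^2$ for a constant $C$ independent of the conditioning path $\tt W$ — this uniformity is the crucial point that makes the conditioning work. Then the standard computation gives $\P^{\tt W}$-a.s.
$$\E^{\tt W}[\log f(Y_t)]\le \log \E^{\tt W}[f(X_t^\nu)\,|\,\F_0\text{-coupling}] + \E^{\tt W}[\log R] \le \log (P_t f(\nu)\text{-conditional version}) + \frac{C}{2t}\W_2(\mu,\nu)^2,$$
where $\log R$ contributes $\tfrac12\E^{\tt W}\int_0^t\ll^{-2}|\xi_s|^2\d s$ via the entropy identity, and where I must be careful that $Y_t$ has, under $\Q$ conditioned on $\tt W$, the same law as $X_t^\mu$ conditioned on $\tt W$ — this is where one needs $Y_t=X_t^\nu$ (coupling is successful) so the right side is genuinely $\log$ of the conditional $P_tf(\nu)$.

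The third and final step is to remove the conditioning by taking $\E$ over $\tt W$. Since $\log$ is concave, Jensen's inequality gives $\E[\,\log(\text{conditional }P_tf(\nu))\,]\le \log P_tf(\nu)$, while the left side integrates to $P_t\log f(\mu)$ because $Y_t=X_t^\mu$ in law after averaging; the uniform bound $\frac{C}{2t}\W_2(\mu,\nu)^2$ passes through untouched. This yields $P_t\log f(\mu)\le \log P_tf(\nu)+\frac{c}{t}\W_2(\mu,\nu)^2$ with $c=C/2$; relabeling $\mu\leftrightarrow\nu$ gives the stated inequality, and the equivalence with the entropy-cost bound $\Ent(P_t^*\nu|P_t^*\mu)\le\frac{c}{t}\W_2(\mu,\nu)^2$ is the well-known duality (take sup over $f>0$ via $\Ent(\eta|\theta)=\sup_{f>0}\{\eta(\log f)-\log\theta(f)\}$).

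\textbf{Main obstacle.} The delicate point is the conditioning argument: one must verify that under $\P^{\tt W}$ the equation \eqref{E'} really does reduce to a distribution-\emph{free} SDE to which the classical log-Harnack machinery applies verbatim — the subtlety is that $P_s^*\mu$, though deterministic, still appears in the drift $b_s(\cdot,P_s^*\mu)$, so the "decoupled" equation has a time-dependent but otherwise ordinary drift, and one needs the coupling control to dominate $\W_2(P_s^*\mu,P_s^*\nu)^2$ uniformly in the conditioning. Ensuring that all constants in the conditional log-Harnack inequality are independent of the realization of $\tt W$ (so that the final $\E^{\tt W}$ and Jensen step go through) is the technical heart of the proof, and it relies precisely on \eqref{WDS} and $K\in L^1([0,T])$ from {\bf (A)}.
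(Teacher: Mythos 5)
Your proposal matches the paper's proof in all essential respects: you decompose the noise into $\ll\,\mathrm{d}W+\tilde\si\,\mathrm{d}\tilde W$, condition on $\tilde W$ (the paper also conditions on $\mathscr F_0$, a choice it reuses later for the Bismut formula), build a coupling by change of measure so the coupled process meets the target at time $t$, apply Girsanov and the $R\log R$ entropy identity (Young's inequality) to get a conditional log-Harnack bound, and close via Jensen together with the BDG-based estimate $\E\sup_t|\xi_t^\mu-\xi_t^\nu|^2\lesssim\W_2(\mu,\nu)^2$. One small correction to your "main obstacle" paragraph: the conditional cost bound is \emph{not} uniform in the realization of $\tilde W$ — it explicitly contains the $\tilde W$-measurable term $\sup_t|\xi_t^\mu-\xi_t^\nu|^2$, and the drift correction is a constant $\tfrac1{t_0}(X_0^\mu-X_0^\nu+\xi_{t_0}^\mu-\xi_{t_0}^\nu)$ with no $1/(t_0-s)$ singularity — but the final averaging succeeds anyway because that random term has the right moment bound and Jensen absorbs the $\log$.
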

\begin{proof} As explained in Introduction, we will use coupling by change of measure under the conditional expectation given $\tt W$, which will be enough for the proof of
the log-Harnack inequality. But for the study  of Bismut formula later on, we will use the conditional probability and the conditional expectation given both $\tt W$ and $\F_0$:
$$\P^{\tt W,0}:= \P(\ \cdot\ |\tt W,\F_0),\ \ \E^{\tt W,0}:= \E(\ \cdot\ | \tt W,\F_0).$$

(a)  For any $t\in [0,T], \mu\in \scr P_2(\R^d)$ and $ f\in\B_b(\R^d),$  let
$$ P_t^{\tt W,0} f(X_0^\mu):= \E^{\tt W,0} [f(X_t^\mu)]= \E\big[f(X_t^\mu)\big|\tt W,\F_0\big],$$
where $X_t^\mu$ solves \eqref{E'} with $\L_{X_0^\mu}=\mu$.
 By \eqref{PT},
   \begin{equation}\label{PTW} P_tf(\mu)= \E \big[P_t^{\tt W,0} f(X_0^\mu)\big], \ \ \ t\in [0,T], \mu\in \scr P_2(\R^d), f\in \B_b(\R^d).\end{equation}
 Next, let
   \begin{equation}\label{XM} \xi_t^\mu:= \int_0^t \tt \si_s(P_s^*\mu)\d \tt W_s,\ \ t\in [0,T], \mu\in \scr P_2(\R^d).\end{equation}
By  {\bf (A)}, BDG's inequality and \eqref{WDS},  we find   constants $C_1,C_2>0$ such that
\beq\label{-2} \E\Big[\sup_{t\in [0,T]} |\xi_{t}^\mu-\xi_{t}^\nu|^2\Big]\le C_1  \W_2(\mu,\nu)^2\int_0^TK_s\d s\le C_2 \W_2(\mu,\nu)^2,\ \ \mu,\nu\in \scr P_2(\R^d). \end{equation}

(b) For fixed $\mu,\nu\in \scr P_2(\R^d)$, we take $\F_0$-measurable $X_0^\mu$ and $X_0^\nu$ such that
\beq\label{0} \L_{X_0^\mu}=\mu,\ \ \L_{X_0^\nu}=\nu,\ \ \E[|X_0^\mu-X_0^\nu|^2] =\W_2(\mu,\nu)^2.\end{equation}
Since $X_t^\mu$ solves \eqref{E'} with $\L_{X_0^\mu}=\mu$, we have  $\L_{X_t^\mu}=P_t^*\mu$ and the SDE becomes
\beq\label{1} \d X_t^\mu = b_t(X_t^\mu, P_t^*\mu)\d t + \ll \d W_t +\tt\si_t(P_t^*\mu)\d\tt W_t,\ \ t\in [0,T].\end{equation}
For fixed $t_0\in (0,T]$, consider the following SDE:
\beq\label{2} \beg{split}&\d Y_t = \Big\{b_t(X_t^\mu, P_t^*\mu)+\ff 1 {t_0}\big[\xi_{t_0}^\mu-\xi_{t_0}^\nu+X_0^\mu-X_0^\nu\big]\Big\} \d t + \ll \d W_t +\tt\si_t(P_t^*\nu)\d\tt W_t,\\
&\quad \ t\in [0,t_0],\ Y_0=X_0^\nu.\end{split} \end{equation}
By    \eqref{XM}, \eqref{1} and \eqref{2}, we obtain
\beq\label{3} Y_t-X_t^\mu = \ff{t_0-t}{t_0} (X_0^\nu-X_0^\mu) +\ff t{t_0} \big(\xi_{t_0}^\mu-\xi_{t_0}^\nu\big) +\xi_t^\nu-\xi_t^\mu,\ \ t\in [0,t_0].\end{equation}
 To formulate $P_{t_0} f(\nu)$ using $Y_{t_0}$, we make Girsanov's transform as follows. Let
\beq\label{ETA} \eta_t:=b_t(Y_t,P_t^*\nu)-b_t(X_t^\mu,P_t^*\mu) + \ff 1 {t_0} \big[\xi_{t_0}^\nu-\xi_{t_0}^\mu+X_0^\nu-X_0^\mu\big],\ \ t\in [0,t_0].\end{equation}  By
 {\bf (A)} and \eqref{WDS}, we find a constant $c_1>0$ such that
 \begin{equation*} \beg{split} |\eta_t|^2\le &\,c_1K_t\big( \W_2(\mu,\nu)^2 + |\xi_t^\nu-\xi_t^\mu|^2\big)\\
 &  + c_1\Big(\ff {t^2K_t+1}{t_0^2} |\xi_{t_0}^\mu-\xi_{t_0}^\nu|^2 +\ff 1 {t_0^2} |X_0^\mu-X_0^\nu|^2 \Big),\ \ t\in [0,t_0].\end{split}\end{equation*}
Since $\int_0^TK_t\d t<\infty$,  we find a constant $c_2>0$ uniform in $t_0\in (0,T]$, such that
\beq\label{*N} \ff 1 {2\ll^2} \int_0^{t_0} |\eta_t|^2\d t \le   c_2\W_2(\mu,\nu)^2+\ff{c_2}{t_0}\Big(|X_0^\mu-X_0^\nu|^2+ \sup_{t\in [0,t_0]} |\xi_t^\mu-\xi_t^\nu|^2\Big).\end{equation}
Let $\d\Q^{\tt W,0}:= R^{\tt W,0}\,\d\P^{\tt W,0}$, where
\beq\label{RW} R^{\tt W,0}:= \e^{\int_0^{t_0}\<\ff 1 \ll \eta_s, \d W _s\> -\ff 1 2 \int_0^{t_0} |\ff 1 \ll \eta_s|^2\d s}.\end{equation}
 By Girsanov's theorem,  under the weighted conditional probability $\Q^{\tt W,0},$
 $$ \hat{W}_t :=   W_t-\int_0^t \ff 1 \ll \eta_s\d s,\ \ t\in [0,t_0]$$
 is a $d$-dimensional Brownian motion. By  \eqref{2}, $\hat Y_t:= Y_t-\xi_t^\nu$ solves the SDE
 $$\d \hat Y_t = b_t(\hat Y_t+\xi_t^\nu, P_t^*\nu)\d t + \ll \d \hat{W}_t,\ \ t\in [0,t_0], \hat Y_0=X_0^\nu.$$
 On the other hand, let $X_t^\nu$ solve \eqref{E'} with initial value $X_0^\nu$. Then
 $$\hat X_t^\nu:= X_t^\nu-\xi_t^\nu,\ \ t\in [0,t_0]$$ solves the same SDE as $\hat Y_t$  for $W$ replacing $\hat W$. Then  the weak uniqueness of this equation ensured by {\bf (A)}
 implies
  $$\L_{\hat Y_{t_0} |\Q^{\tt W,0}} =\L_{\hat X_{t_0}^\nu|\P^{\tt W,0}},$$
 where  $\L_{\hat Y_{t_0} |\Q^{\tt W,0}} $ is the law of $\hat Y_{t_0}$ under $\Q^{\tt W,0}$, while    $\L_{\hat X_{t_0}^\nu|\P^{\tt W,0}}$ is the law of $\hat X_{t_0}^\nu$  under $\P^{\tt W,0}$. Since
 $\xi_{t_0}^\nu$ is deterministic given $\tt W$, it follows that
 $$\L_{Y_{t_0}|\Q^{\tt W,0}}= \L_{\hat Y_{t_0} +\xi_{t_0}^\nu|\Q^{\tt W,0}} = \L_{\hat X_{t_0}^\nu +\xi_{t_0}^\nu|\P^{\tt W,0}} = \L_{X_{t_0}^\nu |\P^{\tt W,0}}.$$
 Combining this with   $X_{t_0}^\mu=Y_{t_0}$ due to \eqref{3}, we obtain
  \beq\label{NB} P_{t_0}^{\tt W,0}   f(X_0^\nu):= \E^{\tt W,0}[   f(X_{t_0}^\nu)] = \E_{\Q^{\tt W,0}} [  f(Y_{t_0}) ]
 = \E^{\tt W,0}[ R^{\tt W,0}  f(X_{t_0}^\mu) ],\ \ f\in\B_b(\R^d).\end{equation}
 By    Young's  inequality \cite[Lemma 2.4]{ATW},  we derive
\beg{align*} &P_{t_0}^{\tt W,0}\log  f(X_0^\nu):= \E^{\tt W,0}[ \log f(X_{t_0}^\nu)] = \E_{\Q^{\tt W,0}} [\log f(Y_{t_0}) ]\\
 &= \E^{\tt W,0}[ R^{\tt W,0}  \log f(X_{t_0}^\mu) ]  \le \log \E^{\tt W,0} [ f(X_{t_0}^\mu)] + \E^{\tt W,0} [R^{\tt W,0}\log R^{\tt W,0}] \\
& = \log P_{t_0}^{\tt W,0}  f(X_{0}^\mu) + \ff 1 2 \int_0^{t_0}\ff 1 {\ll^2}\E_{\Q^{\tt W,0}} [|  \eta_t|^2]\d t,\ \ 0< f\in \B_b(\R^d).\end{align*}
This together with \eqref{*N} gives
\beq\label{*W0} P_{t_0}^{\tt W,0}\log  f(X_0^\nu)\le \log P_{t_0}^{\tt W,0}  f(X_{0}^\mu) + c_2\W_2(\mu,\nu)^2+\ff{c_2}{t_0}\Big(|X_0^\mu-X_0^\nu|^2+ \sup_{t\in [0,t_0]} |\xi_t^\mu-\xi_t^\nu|^2\Big).
\end{equation}
Taking expectation for both sides, by \eqref{PTW}, \eqref{-2}, \eqref{0} and Jensen's inequality,    we find a constant $c>0$ such that
 \beg{align*} P_{t_0}\log f(\nu)&= \E\big[P_{t_0}^{\tt W,0}\log  f(X_{0}^\nu)\big] \le \E\big[ \log P_{t_0}^{\tt W,0}  f(X_{0}^\mu)\big]+ \ff{c}{t_0}   \W_2(\mu,\nu)^2\\
 &\le \log  P_{t_0}    f(\mu) + \ff{c}{t_0}   \W_2(\mu,\nu)^2,\ \ t_0\in (0,T], \mu,\nu\in \scr P_2(\R^d).\end{align*}
 \end{proof}

\subsection{Bismut formula}

 We aim to establish
the Bismut type formula \eqref{BS} for the intrinsic derivative of $P_tf$. To this end, we first recall the definition of intrinsic derivative, see \cite{RW0} for historical remarks  on this derivative and    links to other derivatives for functions of   measures.

 \begin{defn}   Let $k\in (1,\infty)$. \beg{enumerate} \item[$(1)$] A continuous function
$f$ on $\scr P_k(\R^d)$ is called intrinsically differentiable, if for any $\mu\in \scr P_k(\R^d)$,
 $$T_{\mu,k}(\R^d):=L^k(\R^d\to\R^d;\mu)\ni\phi\mapsto D_\phi^If(\mu):= \lim_{\vv\downarrow 0} \ff{f(\mu\circ(id+\varepsilon\phi)^{-1})-f(\mu)}{\vv}\in\mathbb{R}
$$ is a well  defined  bounded linear operator. In this case,   the norm of    the intrinsic derivative $D^I f(\mu)$  is given by
$$\|D^If(\mu)\|_{L^{k^*}(\mu)} :=\sup_{\|\phi\|_{L^k(\mu)}\le 1} |D^I_\phi f(\mu)|.$$
\item [$(2)$] $f$ is called $L$-differentiable on $\scr P_k(\R^d)$, if it is intrinsically differentiable and
 \begin{equation*}
 \lim_{\|\phi\|_{T_{\mu,k}(\R^d)}\downarrow0}\ff{|f(\mu\circ(id+\phi)^{-1})-f(\mu)-D_\phi^If(\mu)| }{\|\phi\|_{T_{\mu,k}(\R^d)}}=0,\ \ \mu\in \scr P_k(\R^d).
 \end{equation*}
We denote $f\in C^1(\scr P_k(\R^d))$, if it is $L$-differentiable such that $D^If(\mu)(x)$ has a jointly continuous version in $(x,\mu)\in \R^d\times \scr P_k(\R^d)$.
 \item[$(3)$] We denote  $g\in C^{1,1}(\R^d\times \scr P_k(\R^d))$, if $g: \R^d\times\scr P_k(\R^d)\to\R$  is $C^1$ in $x\in \R^d$ and $\mu\in \scr P_k(\R^d)$ respectively,  such that
 $$\nn g(x,\mu):= \nn \{g(\cdot,\mu)\}(x),\ \ \ D^Ig(x,\mu)(y):= D^I\{g(x,\cdot)\}(\mu)(y)$$ are jointly continuous in $(x,y,\mu)\in \R^d\times\R^d\times\scr P_k(\R^d)$.
 \end{enumerate}
  \end{defn}

 In this part, we consider \eqref{E'} with coefficients
 $$\tilde{\si}: [0,T]\times  \scr P_k(\R^d)\to \R^d\otimes\R^d,\ \ b: [0,T]\times \R^d\times \scr P_k(\R^d)\to\R^d$$
 satisfying the following assumption.

\beg{enumerate} \item[{\bf (B)}]     $\ll>0$ and $k\in (1,\infty)$ are constants,       denote $k^*:=\ff{k}{k-1}$.
 For any $t\in [0,T]$,   $b_t\in C^{1,1}(\R^d\times\scr P_k(\R^d)), $   $\tt\sigma_t\in C^1(\scr P_k(\R^d))$, and there exists    $0\le K\in L^1([0,T])$ such that
 \beg{align*}
 &  |D^I b_t(x,\cdot)(\mu)(y)|+\|D^I  \tt\sigma_t  (\mu)(y) \| \le \ss{K_t} (1+|y|^{k-1}) ,\\
 &    |b_t(0,\delta_0)|+   |\nn b_t(\cdot,\mu)(x)| \le \ss {K_t},\  \ (t,x,\mu)\in [0,T]\times  \R^d\times\scr P_k(\R^d),\  y\in\R^d.\end{align*}
   \end{enumerate}

By \cite[Lemma 3.1]{FYW3}, {\bf (B)} implies     {\bf (A)}    for $(\scr P_k(\R^d),\W_k)$ replacing  $(\scr P_2(\R^d),\W_2).$
 So,  according to \cite[Theorem 3.3]{HRW},   the SDE \eqref{E'} is well-posed for distributions in $\scr P_k(\R^d)$, and there exists a constant $c>0$ such that
\begin{align}\label{WBS}\W_k(P_t^\ast \mu,P_t^\ast \nu)\leq c  \W_k(\mu,\nu),\ \ \mu,\nu\in\scr P_k(\R^d), t\in[0,T].
\end{align} By this estimate and   {\bf (A)}    for $(\scr P_k(\R^d),\W_k)$ replacing  $(\scr P_2(\R^d),\W_2),$  the argument leading to \eqref{*W0} yields that there exists a constant $c>0$ such that for any $t\in(0,T], 0<f\in\scr B_b(\R^d)$,
\beq\label{*W}P_{t}^{\tt W,0}\log  f(X_0^\nu)\le \log P_{t}^{\tt W,0}  f(X_{0}^\mu)+c \W_k(\mu,\nu)^2+\ff{c }{t}\Big(|X_0^\mu-X_0^\nu|^2+ \sup_{s\in [0,t]} |\xi_s^\mu-\xi_s^\nu|^2\Big).\end{equation}

To calculate $D_\phi^IP_tf(\mu)$ for $\mu\in \scr P_k(\R^d)$ and $\phi\in T_{\mu,k}(\R^d)$,     let  $X_0^\mu$ be $\F_0$-measurable such that $\L_{X_0^\mu}=\mu.$ Then
  $$\L_{X_0^{\mu}+\vv\phi(X_0^\mu)}=\mu^\vv:= \mu\circ (id+\vv\phi)^{-1},\ \ \ \vv\in [0,1].$$
  For any $\vv\in [0,1],$ let
  $X_t^{\mu^\vv}$ solve \eqref{E'} with $ X_0^{\mu^\vv}=X_0^{\mu}+\vv\phi(X_0^\mu),$  i.e.
   \begin{equation*}\beg{split}& \d X_t^{\mu^\vv} = b_t(X_t^{\mu^\vv}, P_t^*\mu^\vv) \d t +\ll \d W_t+ \tt\si_t(P_t^*\mu^\vv) \d \tt W_t,\\
 &\ X_0^{\mu^\vv}=X_0^\mu+\vv\phi(X_0^\mu), t\in [0,T],\vv\in [0,1].\end{split} \end{equation*}
  Consider the spatial derivative of $X_t^{\mu}$ along $\phi$:
 \begin{equation*}  \nn_\phi X_t^{\mu}:=\lim_{\vv\downarrow 0} \ff{X_t^{\mu^\vv}-X_t^{\mu}}\vv,\ \
 \  \ t\in [0,T], \phi\in T_{\mu,k}(\R^d).\end{equation*}
  For any $0\le s<t\le T,$ define
\beg{align*} &N^{\mu,\phi}_{s,t}:=\ff{t-s} t\phi(X_0^\mu)+\int_0^s\Big\<\E\big[\<D^I\tilde{\si}_r(P_r^*\mu)(X_r^\mu), \nn_\phi X_r^\mu\>\big],\ \d\tt W_r\Big\>\\
&\qquad \qquad -\ff s t \int_0^t \Big\<\E\big[\<D^I\tilde{\si}_r(P_r^*\mu)(X_r^\mu), \nn_\phi X_r^\mu\>\big],\ \d\tt W_r\Big\>,\\
&M^{\mu,\phi}_{s,t}:=\E\big[\big\< \{D^Ib_s(y, \cdot)\}(P_s^\ast \mu)(X_s^\mu),\nabla_{\phi}X_s^\mu\big\>\big]_{y=X_s^{\mu}}+ \ff 1 {t}\phi(X_0^\mu)\\
&\qquad\qquad +\ff 1 {t} \int_0^t \Big\<\E\big[\<D^I\tilde{\si}_r(P_r^*\mu)(X_r^\mu), \nn_\phi X_r^\mu\>\big],\ \d\tt W_r\Big\>.\end{align*}
 The main result in this part is the following.

\beg{thm}\label{TA2'} Assume   {\bf (B)}.
\beg{enumerate}
\item[$(1)$] For any $\mu\in \scr P_k(\R^d)$ and $\phi\in T_{\mu,k}(\R^d)$, $(\nn_\phi X_\cdot^{\mu})$ exists in $L^k(\Omega\to C([0,T],\R^d),\P)$ such that for some    constant $c>0$,
$$\E\Big[\sup_{t\in[0,T]}|\nabla_{\phi}X_t^\mu|^k\Big]\leq c\|\phi\|_{L^k(\mu)}^k,\ \ \mu\in \scr P_k(\R^d), \phi\in T_{\mu,k}(\R^d).$$
\item[$(2)$]
 For any $f\in \B_b(\R^d), t\in (0,T]$,   $\mu\in \scr P_k(\R^d)$ and $\phi\in T_{\mu,k}(\R^d)$, $D^I_\phi P_tf(\mu)$ exists and satisfies
 \beq\label{BSMI}     D_\phi^I  P_tf(\mu)=\ff 1 \ll    \E\bigg[f(X_t^{\mu})\int_0^t \Big\< \nn_{N_{s,t}^{\mu,\phi}}b_s(\cdot, P_s^*\mu)(X_s^\mu)  +M_{s,t}^{\mu,\phi},\ \d W_s\Big\>\bigg].
    \end{equation}
Consequently, $P_tf$ is intrinsically differentiable and for some
 constant $c>0$,
\beq\label{EST}\beg{split}& \|D^I P_t f(\mu)\|_{L^{k^*}(\mu)}
 \le \ff c {\ss t}\big( P_t |f|^{k^*}(\mu)\big)^{\ff 1 {k^*}},\\
 &\qquad\quad    f\in \B_b(\R^d),  \mu\in \scr P_k(\R^d), t\in (0,T].\end{split}\end{equation}\end{enumerate}
\end{thm}

\beg{proof}  The first assertion follows from \cite[Lemma 5.2]{BRW}.  By the first assertion, {\bf (B)} and the definition  of $(N_{s,t}^{\mu,\phi}, M_{s,t}^{\mu,\phi})$,  we deduce
\eqref{EST} from \eqref{BSMI}. So, it remains to prove \eqref{BSMI}.

(a) Since {\bf (B)} implies {\bf (A)} for $\scr P_k(\R^d)$ replacing $\scr P_2(\R^d)$, the argument in the proof of Theorem \ref{Loh} up to \eqref{NB} still applies.
For fixed $t_0\in (0,T], \mu\in \scr P_k(\R^d)$ and $\phi\in T_{\mu,k}(\R^d)$, let $X_t^\mu$ solve \eqref{1}. Next, for any $\vv\in (0,1]$, let
$ Y_t^\vv$ solve \eqref{2} for
$$\nu=\mu^\vv,\ \ Y_0=Y_0^\vv:= X_0^\mu+\vv\phi(X_0^\mu).$$
Then \eqref{3} with $(Y_t,\nu)=(Y_t^\vv,\mu^\vv)$ becomes
\beq\label{YX} Y_t^\vv-X_t^\mu= \ff{t_0-t}{t_0}\vv\phi(X_0^\mu) +\ff t{t_0} (\xi_{t_0}^\mu-\xi_{t_0}^{\mu^\vv}) +\xi_t^{\mu^\vv}-\xi_t^\mu,\ \ t\in [0,t_0].\end{equation}
Let
$$H_t:= \int_0^t \Big\<\E\big[\big\<D^I\tilde{\si}_s(P_s^*\mu)(X_s^\mu), \nn_\phi X_s^\mu\big\>\big],\ \d\tt W_s\big\>,\ \ t\in [0,T].$$
By {\bf (B)} and \eqref{WBS}, we obtain
\beq\label{a1} \big\|\tt\si_s(P_s^*\mu^\vv)-\tt\si_s(P_s^*\mu)\big\|^2\le \vv^2 c^2 K_s \|\phi\|_{L^k(\mu)}^2,\ \ \vv\in [0,1], s\in [0,T].\end{equation}
So, by {\bf (B)},   the chain rule in  \cite[Theorem 2.1(1)]{BRW}, \eqref{XM}, BDG's inequality and the dominated convergence theorem, we obtain
\beq\label{a2} \lim_{\vv\downarrow 0} \E\bigg[\sup_{t\in [0,T]} \Big|\ff{\xi_t^{\mu^\vv}-\xi_t^\mu}\vv -H_t\Big|^2\bigg]=0.\end{equation}
Let $(\eta_t^\vv, R^\vv)=(\eta_t, R^{\tt W,0})$ be defined in \eqref{ETA} and \eqref{RW} for $(Y_t,\nu)=(Y_t^\vv,\mu^\vv)$. By
 {\bf (B)} and \eqref{YX}, we find a constant $\kk>0$ such that
 \beg{align*}& \ff{|\eta_s^\vv|^2}{\vv^2} \le\kk K_s \bigg( \|\phi\|_{L^k(\mu)}^2 +|\phi(X_0^\mu)|^2+\sup_{t\in [0,t_0]} \ff{|\xi_t^{\mu^\vv}-\xi_t^\mu|^2}{\vv^2}\bigg)=:\LL_s,\\
 &\lim_{\vv\downarrow 0} \ff{\eta_s^\vv}\vv =  \nn_{N_{s,t_0}^{\mu,\phi}}b_s(\cdot, P_s^*\mu)(X_s^\mu)+M_{s,t_0}^{\mu,\phi},\ \ s\in [0,t_0].\end{align*}
 Since $\LL_s$ is deterministic given $\tt W$ and $\F_0$, this together with \eqref{NB} and the dominated convergence theorem yields
 \beq\label{*W2}\beg{split}&  \lim_{\vv\downarrow 0} \ff{P_{t_0}^{\tt W,0} f(X_0^{\mu^\vv})-P_{t_0}^{\tt W,0} f(X_0^\mu)}\vv =\lim_{\vv\downarrow 0} \E^{\tt W,0}\Big[f(X_{t_0}^\mu)\ff{R^\vv-1}\vv\Big]\\
 &=   \frac{1}{\lambda}\E^{\tt W,0} \bigg[f(X_{t_0}^\mu)\int_0^{t_0} \Big\< \nn_{N_{s,t_0}^{\mu,\phi}}b_s(\cdot, P_s^*\mu)(X_s^\mu) + M_{s,t_0}^{\mu,\phi},\ \d W_s\Big\>\bigg].\end{split}\end{equation}

(b) Let $\L_{\xi|\P^{\tt W,0}}$ be the conditional distribution of a random variable $\xi$ under $\P^{\tt W,0}$.
By Pinsker's inequality and \eqref{*W}, we have
\beg{align*}& \sup_{\|f\|_\infty\le 1} \big|P_{t_0}^{\tt W,0} f(X_0^{\mu^\vv})-P_{t_0}^{\tt W,0} f(X_0^\mu)\big|^2 \le 2\,   \Ent\big(\L_{X_{t_0}^{\mu^\vv}|\P^{\tt W,0}}\big|\L_{X_{t_0}^\mu|\P^{\tt W,0}}\big)\\
&\le  c \W_k(\mu^\vv,\mu)^2+ \ff{c}{t_0}\Big(\vv^2|\phi(X_0^\mu)|^2 + \sup_{t\in [0,t_0]} |\xi_t^\mu-\xi_t^{\mu^\vv}|^2\Big).\end{align*}
This together with $\W_k(\mu^\vv,\mu)\le \vv \|\phi\|_{L^k(\mu)}$ implies that for some constant $c(t_0)>0$,
\beg{align*} &\ff{|P_{t_0}^{\tt W,0} f(X_0^{\mu^\vv})-P_{t_0}^{\tt W,0} f(X_0^\mu)|}{\vv} \\
&\le  \|f\|_\infty  c(t_0)\Big(\|\phi\|_{L^k(\mu)}+ |\phi(X_0^\mu)|  + \sup_{t\in [0,t_0]}\ff{ |\xi_t^{\mu^\vv}-\xi_t^\mu|}\vv\Big), \ \  \vv\in (0,1].\end{align*}
Combining this with \eqref{-2} and \eqref{a1}, we may apply the dominated convergence theorem to \eqref{*W2} to derive
\beg{align*} &D^I_\phi P_{t_0} f(\mu):=\lim_{\vv\downarrow 0} \E\bigg[\ff{P_{t_0}^{\tt W,0} f(X_0^{\mu^\vv})-P_{t_0}^{\tt W,0} f(X_0^\mu)}\vv\bigg]
 =  \E\bigg[\lim_{\vv\downarrow 0} \ff{P_{t_0}^{\tt W,0} f(X_0^{\mu^\vv})-P_{t_0}^{\tt W,0} f(X_0^\mu)}\vv\bigg]\\
&=  \frac{1}{\lambda}\E \bigg[f(X_{t_0}^\mu)\int_0^{t_0} \Big\< \nn_{N_{s,t_0}^{\mu,\phi}}b_s(\cdot, P_s^*\mu)(X_s^\mu) + M_{s,t_0}^{\mu,\phi},\ \d W_s\Big\>\bigg].\end{align*}
\end{proof}

\section{Degenerate case }

 Consider the following distribution dependent stochastic Hamiltonian system for $X_t=(X_t^{(1)}, X_t^{(2)})\in \mathbb{R}^{m+d}$:
\beq\label{E0}
\begin{cases}
\d X_t^{(1)}=\big\{AX^{(1)}_t+MX_t^{(2)}\big\}\d t, \\
\d X_t^{(2)}=b_t(X_t,\L_{X_t})\d t+\sigma_t(\L_{X_t})\d B_t,\ \ t\in [0,T],
\end{cases}
\end{equation}
where $B=(B_t)_{t\in [0,T]}$ is a $d$-dimensional standard Brownian motion, $A$ is an $m\times m$ and $M$ is an $m\times d$ matrix, and
$$\sigma:[0,T]\times \scr P (\R^{m+d})\to \mathbb{R}^{d}\otimes\R^d,\ \ b:[0,T]\times\R^{m+d}\times \scr P(\R^{m+d})\to\mathbb{R}^d$$ are measurable, where
$\scr P(\R^{m+d})$ is the space of probability measures on $\R^{m+d}$ equipped with the weak topology.
 In \cite{GW,WZ1}, where the coefficients are distribution independent, the Bismut formula is derived for stochastic Hamiltonian system.

For any $k\ge 1$, let
$$\scr P_k(\R^{m+d}):=\big\{\mu\in \scr P(\R^{m+d}):\ \|\mu\|_k:=\mu(|\cdot|^k)^{\ff 1 k}<\infty\big\},$$
which is a Polish space under the $L^k$-Wasserstein distance $\W_k$. When \eqref{E0} is well-posed for distributions in $\scr P_k(\R^{m+d})$, let
$P_t^*\mu=\L_{X_t}$ for the solution with initial distribution $\mu\in \scr P_k(\R^{m+d})$. We
  aim to establish   the log-Harnack inequality and Bismut formula for
  $$P_tf(\mu):= \int_{\R^{m+d}}f\d (P_t^*\mu),\ \ f\in \B_b(\R^{m+d}).$$

   By the same reason reformulating \eqref{EMV} as \eqref{E'},  instead of $\eqref{E0}$ we consider
 \beq\label{E00}
\begin{cases}
\d X_t^{(1)}=\big\{AX^{(1)}_t+MX_t^{(2)}\big\}\d t, \\
\d X_t^{(2)}=b_t(X_t,\L_{X_t})\d t+\ll \d W_t+ \tt\sigma_t(\L_{X_t})\d \tt W_t,\ \ t\in [0,T],
\end{cases}
\end{equation} where   $W_t,\tt W_t$ are two independent $d$-dimensional Brownian  motions, and
$$\tt \si: [0,T]\times \scr P(\R^{m+d})\to \R^d\otimes \R^d$$ are measurable.

\subsection{Log-Harnack inequality}

To establish the log-Harnack inequality, we make the following assumption.

 \begin{enumerate}
\item[\bf{(C)}] $\ll>0$ is a constant,  $(\tt \si,b)$ satisfies conditions in {\bf (A)} for $(x,\mu)\in \R^{m+d}\times \scr P_2(\R^{m+d})$, and    the following  Kalman's rank condition holds for some integer $1\leq l\leq m$:
\begin{align}\label{RRS}\mathrm{Rank}[A^iM, 0\le i\le l-1]=m,\end{align} where $A^0:=I_m$ is the $m\times m$-identity matrix. \end{enumerate}
 By \cite[Theorem 2.1]{FYW1},   {\bf (C)} implies that   \eqref{E00} is   well-posed  for distributions in $\scr P_2(\R^{m+d}),$ and there exists a constant $c>0$ such that
$$\W_2(P_t^*\mu,P_t^*\nu)\le c\W_2(\mu,\nu), \ \ \ \mu,\nu\in \scr P_2(\R^{m+d}), t\in [0,T].$$ So, as in \eqref{-2}, we find a constant $C>0$ such that
\beq\label{*N1} \E\bigg[\sup_{t\in [0,T]} |\xi_t^\mu-\xi_t^\nu|^2\bigg] \le C\W_2(\mu,\nu)^2,\ \ \mu,\nu\in \scr P_2(\R^{m+d}).\end{equation}

To distinguish the singularity of $P_t$ in the degenerate component $x^{(1)}$ and the non-degenerate one $x^{(2)}$,
for any $t>0$ we consider the modified distance
$$\rr_t(x,y):= \ss{t^{-2}|x^{(1)}-y^{(1)}|^2+|x^{(2)}-y^{(2)}|^2},\ \ \ x,y\in\R^{m+d},$$
and define the associated $L^2$-Wasserstein distance
$$\W_{2,t}(\mu,\nu):=\inf_{\pi\in \C(\mu,\nu)}\bigg( \int_{\R^{m+d}\times \R^{m+d}} \rr_t(x,y)^2 \pi(\d x,\d y)\bigg)^{\ff 1 2}. $$
It is clear that
\beq\label{W0} \ff 1 {T^2\lor 1} \W_2^2\le \W_{2,t}^2 \le \ff{1\lor T^2}{t^2} \W_2^2,\ \ \ \ t\in (0,T].\end{equation}
For    $t\in (0,T]$, let
$$Q_{t} :=\int_0^{t} \ff{s(t-s)}{t^2}\e^{-sA}MM^*\e^{-sA^*}\d s.$$
According to \cite{S}, see also \cite[Proof of Theorem 4.2(1)]{WZ1},  the rank condition  \eqref{RRS} implies
\beq\label{QT} \|Q_t^{-1}\|\le c_0 t^{1-2l},\ \ \ t\in (0,T]\end{equation}
for some constant $c_0>0$.

\begin{thm}\label{LHI} Assume {\bf (C)} and let    $P_t^*$ be associated with the degenerate  SDE  $\eqref{E00}.$
Then there exists a constant $ c>0$ such that
\beq\label{EC2}  \beg{split}&P_t\log f(\nu) -   \log P_tf(\mu)
 \le \ff{c}{t^{4l-3}}\W_{2,t}(\mu,\nu)^2  \le \ff{c(1\lor T^2)}{t^{4l-1}}\W_{2}(\mu,\nu)^2,\\
 &t\in (0,T],\ \mu,\nu\in \scr P_2(\R^{m+d}),\  \ 0<f\in \B_b(\R^{m+d}).\end{split}\end{equation}
Equivalently,  for any $  t\in (0,T]$ and $  \mu,\nu\in \scr P_2(\R^{m+d}),$
$$  \Ent(P_t^*\nu|P_t^*\mu)\le   \ff{c}{t^{4l-3}}\W_{2,t}(\mu,\nu)^2  \le \ff{c(1\lor T^2)}{t^{4l-1}}\W_{2}(\mu,\nu)^2.$$
 \end{thm}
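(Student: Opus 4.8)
The plan is to run the proof of Theorem~\ref{Loh} line by line, replacing the non-degenerate coupling by the classical coupling for stochastic Hamiltonian systems of \cite{WZ1,GW}. First I would condition on $\tt W$ and $\F_0$: with $\P^{\tt W,0}=\P(\cdot\,|\tt W,\F_0)$, $\E^{\tt W,0}=\E(\cdot\,|\tt W,\F_0)$, $\xi_t^\mu:=\int_0^t\tt\si_s(P_s^*\mu)\,\d\tt W_s$ (deterministic given $\tt W$) and $P_t^{\tt W,0}f(X_0^\mu):=\E^{\tt W,0}[f(X_t^\mu)]$, one has $P_tf(\mu)=\E[P_t^{\tt W,0}f(X_0^\mu)]$, and under $\P^{\tt W,0}$ the system \eqref{E00} becomes a degenerate SDE whose only noise is the \emph{additive} term $\ll\,\d W_t$ acting on the non-degenerate component — precisely the setting in which the degenerate Harnack machinery was developed. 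So it suffices to prove a conditional log-Harnack inequality with the right cost and then average.

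For fixed $t_0\in(0,T]$ and $\mu,\nu\in\scr P_2(\R^{m+d})$ I would take $\F_0$-measurable $X_0^\mu,X_0^\nu$ with $\L_{X_0^\mu}=\mu$, $\L_{X_0^\nu}=\nu$ realizing the optimal coupling for the \emph{weighted} distance, $\E[\rr_{t_0}(X_0^\mu,X_0^\nu)^2]=\W_{2,t_0}(\mu,\nu)^2$, let $X_t^\mu$ solve \eqref{E00} from $X_0^\mu$ (so $\L_{X_t^\mu}=P_t^*\mu$), and build the coupling $Y_t=(Y_t^{(1)},Y_t^{(2)})$ on $[0,t_0]$ by $\d Y_t^{(1)}=\{AY_t^{(1)}+MY_t^{(2)}\}\d t$ and $\d Y_t^{(2)}=\{b_t(X_t^\mu,P_t^*\mu)+\gg_t\}\d t+\ll\,\d W_t+\tt\si_t(P_t^*\nu)\,\d\tt W_t$, $Y_0=X_0^\nu$, with an adapted control $\gg_t$ chosen so that $Y_{t_0}=X_{t_0}^\mu$. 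For the gap $w_t:=X_t^\mu-Y_t$ one has $\d w_t^{(1)}=\{Aw_t^{(1)}+Mw_t^{(2)}\}\d t$ and $\d w_t^{(2)}=-\gg_t\,\d t+\d(\xi_t^\mu-\xi_t^\nu)$, so $w^{(1)}$ is driven by $w^{(2)}$ through the Kalman pair $(A,M)$. Following the classical prescription I would fix the path $w_t^{(2)}=\ff{t_0-t}{t_0}(X_0^\mu-X_0^\nu)^{(2)}+(\xi_t^\mu-\xi_t^\nu)-\ff t{t_0}(\xi_{t_0}^\mu-\xi_{t_0}^\nu)+\ff{t(t_0-t)}{t_0^2}M^*\e^{-tA^*}Q_{t_0}^{-1}\zeta$, where $\zeta\in\R^m$ is the unique vector produced by imposing $w_{t_0}^{(1)}=0$; this uses exactly that $Q_{t_0}$ is the controllability Gramian making $\int_0^{t_0}\ff{s(t_0-s)}{t_0^2}\e^{-sA}MM^*\e^{-sA^*}\,\d s\,Q_{t_0}^{-1}\zeta=\zeta$, with $\|Q_{t_0}^{-1}\|\le c_0t_0^{1-2l}$ by \eqref{QT}. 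The control $\gg_t$ is then minus the (a.e.\ defined) time-derivative of the bounded-variation part of this path, and it is deterministic given $\tt W,\F_0$; by construction $w_{t_0}=0$.

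Then, as in Theorem~\ref{Loh}, I would perform Girsanov's transform under $\P^{\tt W,0}$ with drift $\eta_t=b_t(Y_t,P_t^*\nu)-b_t(X_t^\mu,P_t^*\mu)-\gg_t$, set $\d\Q^{\tt W,0}=R^{\tt W,0}\,\d\P^{\tt W,0}$ with $R^{\tt W,0}=\exp\{\int_0^{t_0}\<\ff1\ll\eta_s,\d W_s\>-\ff12\int_0^{t_0}|\ff1\ll\eta_s|^2\d s\}$, use weak uniqueness under {\bf (C)} to get $\L_{Y_{t_0}|\Q^{\tt W,0}}=\L_{X_{t_0}^\nu|\P^{\tt W,0}}$, and apply Young's inequality \cite[Lemma~2.4]{ATW} together with $Y_{t_0}=X_{t_0}^\mu$ to obtain $P_{t_0}^{\tt W,0}\log f(X_0^\nu)\le\log P_{t_0}^{\tt W,0}f(X_0^\mu)+\ff1{2\ll^2}\int_0^{t_0}\E_{\Q^{\tt W,0}}[|\eta_s|^2]\,\d s$ for $0<f\in\B_b(\R^{m+d})$. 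The key computation is the energy bound: using $\|Q_{t_0}^{-1}\|\le c_0t_0^{1-2l}$, boundedness of $\e^{-sA}$ and $A\e^{-sA}$ on $[0,T]$, $\int_0^TK_s\,\d s<\8$ and the Lipschitz bound for $b$ from {\bf (A)}/{\bf (C)}, one checks that the $Q_{t_0}^{-1}$-term contributes $t_0^{1-4l}|(X_0^\mu-X_0^\nu)^{(1)}|^2+t_0^{3-4l}|(X_0^\mu-X_0^\nu)^{(2)}|^2=t_0^{3-4l}\rr_{t_0}(X_0^\mu,X_0^\nu)^2$ to $\int_0^{t_0}|\eta_s|^2\d s$, that (since $4l-3\ge1$) this dominates the remaining $t_0^{-1}$- and $K$-weighted pieces, and that the extra $\xi^\mu-\xi^\nu$ terms are absorbed the same way by $t_0^{3-4l}\sup_{s\le t_0}|\xi_s^\mu-\xi_s^\nu|^2$; hence $\ff1{2\ll^2}\int_0^{t_0}|\eta_s|^2\d s\le c\,t_0^{3-4l}(\rr_{t_0}(X_0^\mu,X_0^\nu)^2+\sup_{s\le t_0}|\xi_s^\mu-\xi_s^\nu|^2)$ with $c$ depending on $T$. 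Taking $\E^{\tt W,0}$ and then $\E$, using $P_tf(\mu)=\E[P_t^{\tt W,0}f(X_0^\mu)]$ with Jensen's inequality, the choice $\E[\rr_{t_0}(X_0^\mu,X_0^\nu)^2]=\W_{2,t_0}(\mu,\nu)^2$, and \eqref{*N1} together with \eqref{W0} to bound $\E[\sup_s|\xi_s^\mu-\xi_s^\nu|^2]\le C\W_2(\mu,\nu)^2\le C(1\lor T^2)\W_{2,t_0}(\mu,\nu)^2$, one gets $P_{t_0}\log f(\nu)\le\log P_{t_0}f(\mu)+\ff c{t_0^{4l-3}}\W_{2,t_0}(\mu,\nu)^2$; the second bound in \eqref{EC2} follows from the right-hand inequality of \eqref{W0}, and the stated equivalence with $\Ent(P_t^*\nu|P_t^*\mu)$ is the standard variational identity already invoked in Theorem~\ref{Loh}.

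The step I expect to carry the technical weight is the degenerate coupling itself: choosing $\gg_t$ (equivalently, the prescribed path of $w_t^{(2)}$) so that \emph{both} components of $w_t$ are steered to $0$ at time $t_0$ while keeping $\int_0^{t_0}|\eta_s|^2\,\d s$ controlled by $\rr_{t_0}(X_0^\mu,X_0^\nu)^2$ with the sharp singular rate $t_0^{3-4l}$, and simultaneously absorbing the $\tt W$-driven perturbation $\xi_t^\mu-\xi_t^\nu$, which only enters the non-degenerate component but then propagates to the degenerate one through $(A,M)$. This is exactly where the Kalman rank condition \eqref{RRS} and the estimate \eqref{QT} do the work; it is the distribution-dependent, $\tt W$-conditioned analogue of the Wang--Zhang construction for stochastic Hamiltonian systems, and the bookkeeping of the $\xi$-terms (absent in the distribution-free setting) is the only genuinely new difficulty.
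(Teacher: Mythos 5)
Your proposal reproduces the paper's proof of Theorem~\ref{LHI} essentially verbatim: the $\tt W$-conditional Girsanov coupling under $\P^{\tt W,0}$, the Gramian-based steering of both components to meet at time $t_0$ (your vector $\zeta$ is exactly $v^{(1)}+V_{t_0}^{\mu,\nu}$ and your prescribed $w^{(2)}$-path is exactly $-\big(\aa_{t_0}(t)+v^{(2)}+\xi_t^\nu-\xi_t^\mu\big)$ in the paper's notation from \eqref{A1}--\eqref{A3}), the $t_0^{3-4l}$ energy bound driven by \eqref{QT}, and the final passage to $\W_{2,t}$ and $\W_2$ via \eqref{*N1}, \eqref{W0}, \eqref{0'} and Jensen's inequality. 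The approach and the key estimates coincide with the paper's.
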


 \begin{proof}  For any $t_0\in (0,T]$ and $\mu,\nu\in \scr P_2(\R^{m+d})$, let $X_0,Y_0$ be $\F_0$-measurable such that
 \beq\label{0'} \L_{X_0}=\mu,\ \ \L_{Y_0}=\nu,\ \ \E[\rr_{t_0}(X_0,Y_0)^2]=\W_{2,t_0}(\mu,\nu)^2.\end{equation}
  Let
 $X_t $ solve \eqref{E00} with initial value $X_0$, we have $P_t^*\mu= \L_{X_t}.$ Let
 \beq\label{TTX} v= (v^{(1)}, v^{(2)}):=(Y_0^{(1)}-X_0^{(1)}, Y_0^{(2)}- X_0^{(2)})=Y_0-X_0.  \end{equation}
 For fixed $t_0\in (0,T],$ let
\beq\label{A1}\beg{split} &\aa_{t_0}(s) := \ff{s}{t_0} \big(\xi_{t_0}^\mu-\xi_{t_0}^\nu -v^{(2)}\big)
  - \ff{s(t_0-s)}{t_0^2} M^*\e^{-sA^*}Q_{t_0}^{-1}\big(v^{(1)}+ V_{t_0}^{\mu,\nu}\big),\\
  &V_{t_0}^{\mu,\nu}:= \int_0^{t_0}\e^{-rA}M\Big\{ \ff{t_0-r}{t_0} v^{(2)} +\ff r {t_0}\big(\xi_{t_0}^\mu-\xi_{t_0}^\nu\big) +\xi_r^\nu-
\xi_r^\mu\Big\}\d r.\end{split} \end{equation}
By \eqref{QT}, we find a constant $c_1>0$ independent of $t_0\in (0,T]$ such that
\beq\label{AP2} \sup_{t\in [0,t_0]} \big\{t_0|\aa_{t_0}'(t)|+|\aa_{t_0}(t)|\big\}\le \ff{c_1}{t_0^{2(l-1)}}  \Big(t_0^{-1}|v^{(1)}|+  |v^{(2)}| + \sup_{t\in [0,t_0]} |\xi_t^\mu-\xi_t^\nu|\Big).\end{equation}
Let $Y_t$ solve the SDE with initial value $Y_0$:
\beq\label{A2} \begin{cases}
\d Y_t^{(1)}=\big\{AY_t^{(1)}  +MY_t^{(2)}\big\}\d t, \\
\d Y_t^{(2)}=\big\{b_t(X_t,P_t^*\mu)+  \aa_{t_0}'(t) \big\}\d t+\ll\d W_t+\tt\si_t(P_t^*\nu)\d\tt W_t,\ \ t\in [0,t_0].
    \end{cases}\end{equation}
This and \eqref{E00} imply
\beq\label{A3}\beg{split}  &  Y_t^{(2)}-  X_t^{(2)} =   \aa_{t_0} (t)+v^{(2)} +\xi_t^\nu-\xi_t^\mu,\\
& Y_t^{(1)}- X_t^{(1)} =  \e^{tA} v^{(1)}  +   \int_0^t \e^{(t-s)A} M \big\{ \aa_{t_0}(s)+v^{(2)} +\xi_s^\nu-\xi_s^\mu\big\} \d s,\ \ \ t\in [0,t_0].\end{split}\end{equation}
Consequently,
$$Y_{t_0}^{(2)}-X_{t_0}^{(2)} = \xi_{t_0}^\mu-\xi_{t_0}^\nu-v^{(2)}+v^{(2)} +\xi_{t_0}^\nu-\xi_{t_0}^\mu=0,$$
\beg{align*} &Y_{t_0}^{(1)}-X_{t_0}^{(1)}=   \e^{t_0A} v^{(1)} +\int_0^{t_0} \e^{(t_0-s)A}M
\Big\{\ff s {t_0} \big(\xi_{t_0}^\mu-\xi_{t_0}^\nu -v^{(2)}\big)+v^{(2)}+\xi_s^\nu-\xi_s^\mu\Big\}\d s\\
&\quad - \e^{t_0A} Q_{t_0}Q_{t_0}^{-1} \bigg(v^{(1)} +\int_0^{t_0} \e^{-rA}M \Big\{\ff{t_0-r}{t_0} v^{(2)} +\ff r {t_0} \big(\xi_{t_0}^\mu-\xi_{t_0}^\nu\big) +\xi_r^\nu-\xi_r^\mu\Big\}\d r\bigg)\\
&=0, \end{align*}
so that
\beq\label{AP4} Y_{t_0}=X_{t_0}.\end{equation}
On the other hand, by \eqref{A3} and  \eqref{AP2} we find a constant $c_2>0$ uniform in $t_0\in (0,T]$ such that
\beq\label{AP3}\beg{split}&\sup_{t\in [0,t_0]}  |Y_t -X_t |^2  \le \ff{c_2}{t_0^{4(l-1)} }   \Big\{ t_0^{-2}|v^{(1)}|^2+  |v^{(2)}|^2+\sup_{t\in [0,t_0]}|\xi_t^\mu-\xi_t^\nu|^2\Big\} \\
&=\ff{c_2}{ t_0^{4(l-1)} }\Big\{\rr_{t_0}(X_0,Y_0)^2+  \sup_{t\in [0,t_0]}|\xi_t^\mu-\xi_t^\nu|^2\Big\}.  \end{split} \end{equation}
  To formulate the  equation of $Y_t$ as \eqref{E00}, let
\beq\label{ETS} \eta_s := \ff 1 \ll \big\{b_s(Y_s, P_s^*\nu)- b_s(X_s, P_s^*\mu)  -  \aa_{t_0}'(s)\big\},\ \ s\in [0,t_0].\end{equation}
By {\bf (C)}, \eqref{AP2} and \eqref{AP3}, we find a constant $c_3>0$ uniformly in $t_0\in (0,T]$  such that
\beq\label{AP5}  \beg{split}  |\eta_s |^2\le &\,c_3    K_s
\Big\{\W_2(\mu,\nu)^2  + t_0^{4(1-l)}   \rr_{t_0}(X_0,Y_0)^2   + t_0^{4(1-l)} \sup_{t\in [0,t_0]} |\xi_t^\mu-\xi_t^\nu|^2\Big\}\\
&  +  c_3 t_0^{2-4l}\Big(\rr_{t_0}(X_0,Y_0)^2+\sup_{t\in [0,t_0]} |\xi_t^\nu-\xi_t^\mu|^2\Big).\end{split}\end{equation}
By Girsanov's theorem,
$$\hat{W}_t:=W_t- \int_0^t \eta_s \d s,\ \ t\in [0,t_0]$$
is a $d$-dimensional Brownian motion under the weighted  conditional probability measure $\d \Q^{\tt W,0}:=R^{\tt W,0} \d\P^{\tt W,0}$, where
$$R^{\tt W,0}:= \e^{\int_0^{t_0} \<\eta_s,\d W_s\>-\ff 1 2 \int_0^{t_0} |\eta_s|^2\d s}.$$
 Let $\tt \xi_t^\nu=(0,\xi_t^\nu).$ By  \eqref{A2}, $\hat Y_t:= Y_t-\tt \xi_t^\nu$ solves the SDE
$$\begin{cases}
\d \hat Y_t^{(1)}=\big\{A\hat Y_t^{(1)}  +M\hat Y_t^{(2)}+M\xi_t^\nu\big\}\d t, \\
\d \hat Y_t^{(2)}= b_t(\hat Y_t+\tt \xi_t^\nu ,P_t^*\nu) \d t+\ll\d \hat{W}_t,\ \ t\in [0,t_0],\ \hat Y_0=Y_0.
\end{cases}$$
 Letting  $X_t^\nu$ solve  \eqref{E00} with $X_0^\nu=Y_0$, we see that $\hat X_t^\nu:= X_t^\nu-\tt\xi_t^\nu$  solves the same  equation  as $\hat{Y}_t$  for $W_t$ replacing $\hat{W}_t$.
By the weak uniqueness and   \eqref{AP4},  \eqref{NB} holds for $\R^{m+d}$ replacing $\R^d$, i.e.   for any $f\in \B_b(\R^{m+d})$,
\beq\label{NB'} P_{t_0}^{\tt W,0} f(X_{t_0}^\nu):= \E^{\tt W,0} [f(X_{t_0}^\nu)]= \E^{\tt W,0} [R^{\tt W,0} f(Y_{t_0})] = \E^{\tt W,0} [R^{\tt W,0} f(X_{t_0})].\end{equation}
 Combining this with Young's inequality and \eqref{AP5}, we find   constants $c_4>0$ such that
\beq\label{YPP} \beg{split} &P_{t_0}^{\tt W,0} \log f(X_{0}^\nu)- \log P_{t_0}^{\tt W,0} f(X_0^\mu) \le  \E^{\tt W,0} [R^{\tt W,0}\log R^{\tt W,0}] = \ff 1 2\E_{\Q^{\tt W,0}} \int_0^{t_0}|\eta_t|^2\d t\\
&\le c_4 \Big\{\W_2(\mu,\nu)^2  +  t_0^{3-4l}   \rr_{t_0}(X_0,Y_0)^2+ t_0^{3-4l}\sup_{t\in [0,t_0]} |\xi_t^\nu-\xi_t^\mu|^2\Big\}.  \end{split} \end{equation}
By taking expectation, using Jensen's inequality, \eqref{*N1}, \eqref{W0} and \eqref{0'},  we prove \eqref{EC2}.
  \end{proof}

\subsection{Bismut formula}
We will use Definition 2.1 for $\R^{m+d}$ replacing $\R^d$.
The following assumption is parallel to {\bf (B)} with an additional rank condition.

 \beg{enumerate} \item[{\bf (D)}]  $(\tt\si,b)$ satisfies {\bf (B)} for $\R^{m+d}$ replacing $\R^d$, and the rank condition  \eqref{RRS}   holds  for some $1\le l\le m.$      \end{enumerate}

Let   $X_0^\mu$ be $\F_0$-measurable such that $\L_{X_0^\mu}=\mu\in\scr P_{k}(\R^{m+d})$, and let $X_t^\mu$ solve \eqref{E00} with initial value $X_0^\mu$. For any $\vv\ge 0$, denote
$$\mu^\vv:= \mu\circ(id+\vv\phi)^{-1},\ \ X_0^{\mu^\vv}:= X_0^\mu+\vv\phi(X_0^\mu).$$
Let $X_t^{\mu^\vv}$ solve \eqref{E00} with initial value $X_0^{\mu^\vv}$. So,
$$X_t^{\mu}=X_t^{\mu^0},\ \ P_t^*\mu^\vv=\L_{X_t^{\mu^\vv}},\ \ t\in [0,T], \vv\ge 0.$$ By \cite[Lemma 5.2]{BRW},
  for any $\phi=(\phi^{(1)},\phi^{(2)})\in T_{\mu,k}(\R^{m+d}),$  {\bf (D)} implies that
$$\nabla_{\phi}X_\cdot^\mu:=\lim_{\vv\downarrow 0}\ff{X_\cdot^{\mu^\vv}-X_\cdot^\mu}\vv$$
exists in $L^k(\OO\to C([0,T]; \R^{m+d});\P)$, and there exists a constant $c>0$  such that
\begin{equation*}\E\bigg[\sup_{t\in [0,T]}|\nn_\phi X_t^\mu|^k\bigg]\le c \|\phi\|_{L^k(\mu)}^k,\ \ \mu\in \scr P_k(\R^{m+d}), \phi\in T_{\mu,k}(\R^{m+d}).\end{equation*}
Finally, for any $t\in (0,T]$ and $s\in [0,t]$, let
\beg{align*} & \gg_t^{\mu,\phi}:= \int_0^t \Big\<\E\big[\big\<D^I\tilde{\si}_r(P_r^*\mu)(X_r^\mu),\ \nn_\phi X_r^\mu\big\>\big],\ \d\tt W_r\Big\>\\
&V_t^{\mu,\phi}:= \int_0^t \e^{-rA}M\Big\{ \ff{t-r}t \phi^{(2)}(X_0^\mu) -\ff r t \gg_{t}^{\mu,\phi} +\gg_r^{\mu,\phi}\Big\}
\d r,\\
&\aa_t^{\mu,\phi}(s):= -\ff{s}t \big\{\phi^{(2)}(X_0^\mu) +\gg_{t}^{\mu,\phi}\big\}   -\ff{s(t-s)}{t^2} M^*\e^{-sA^*}Q_{t}^{-1} \big\{\phi^{(1)}(X_0^\mu) +V_t^{\mu,\phi}\big\},\end{align*} and define
\beg{align*}
 &N_{s,t}^{(1)}:= \e^{sA}\phi^{(1)}(X_0^\mu)+\int_0^s\e^{(s-r)A}M\big\{ \aa_t^{\mu,\phi} (r) +\phi^{(2)}(X_0^\mu) +\gg_r^{\mu,\phi}\big\}\d r\\
   &N_{s,t}^{(2)}:=  \aa_t^{\mu,\phi}(s) +\phi^{(2)}(X_0^\mu) +\gg_s^{\mu,\phi},\\
& M_{s,t}^{\mu,\phi}:= \E\big[\<D_\phi^I b_s(z,\cdot)(P_s^*\mu)(X_s^\mu),\nn_\phi X_s^\mu\>]\big]_{z=X_s^\mu}-(\aa_t^{\mu,\phi})'(s).\end{align*}
Then we have the following result.

\beg{thm}\label{TA3} Assume   {\bf(D)} and let $N_{s,t}^{\mu,\phi} :=\big(N_{s,t}^{(1)}, N_{s,t}^{(2)}\big)\in \R^{m+d}, 0\le s\le t.$  For any $t\in (0,T]$, $\mu\in \scr P_k(\R^{m+d}),$  $\phi\in T_{\mu,k}(\R^{m+d})$ and $f\in \B_b(\R^{m+d})$,
\beq\label{BS3} D_\phi^I P_{t}f(\mu)=  \ff 1\ll \E\bigg[f(X_{t}^{\mu})   \int_0^{t}  \Big\<\nn_{N_{s,t}^{\mu,\phi}}b_s(\cdot, P_s^*\mu)(X_s^\mu) + M_{s,t}^{\mu,\phi},\   \d W_s\Big\>  \bigg].  \end{equation}
 Consequently, $P_{t}f$ is intrinsically differentiable, and there exists a constant $c>0$ such that
\beq\label{ESN} \|D^IP_{t}f(\mu)\|_{L^{k^*}(\mu)}\le  \ff c {t^{2l-\frac{1}{2}}} \big(P_{t}|f|^{k^\ast}(\mu)\big)^{\ff 1 {k^\ast}} ,\ \ t\in (0,T], f\in\B_b(\R^{m+d}).\end{equation}
\end{thm}

\begin{proof}  It is easy to see that under {\bf (D)}, \eqref{ESN} follows from \eqref{BS3}.  So, it suffices to prove \eqref{BS3}.

Let $  X_t^\mu$ solve \eqref{E00} with initial value $ X_0^\mu$, and for any $\vv\in (0,1]$, let
$ Y_t^\vv$ solve \eqref{A2} for
 $Y_0=Y_0^\vv:=X_0^\mu+\vv\phi(X_0^\mu)$ and $  \nu=\mu^\vv.$
Then
 $$\L_{Y_0}=\L_{Y_0^\vv}=\mu^\vv.$$
 Let $\aa_{t_0}^\vv(s)$ be defined   in \eqref{A1} for $\nu=\mu^\vv$. By \eqref{a2} and \eqref{TTX}, we have
\beq\label{XXZ} \lim_{\vv\downarrow 0} \ff 1 \vv \aa_{t_0}^\vv (s)=  \aa_{t_0}^{\mu,\phi}(s),\ \ \ s\in [0,t_0],\end{equation}
while   \eqref{A3} and \eqref{ETS}  reduces to
\begin{equation*}\beg{split}  &  (Y_t^\vv)^{(2)}-  (X_t^\mu)^{(2)} =   \aa_{t_0}^\vv (t)+\vv\phi^{(2)}(X_0^\mu)  +\xi_t^{\mu^\vv}  -\xi_t^\mu,\\
& (Y_t^\vv)^{(1)}- (X_t^\mu)^{(1)} =  \vv \e^{tA} \phi^{(1)}(X_0^\mu)  +   \int_0^t \e^{(t-s)A} M \big\{ \aa_{t_0}^\vv(s)+\vv\phi^{(2)}(X_0^\mu)  +\xi_s^{\mu^\vv}-\xi_s^\mu\big\} \d s,\end{split}\end{equation*}
and
\begin{equation*}  \eta_t^\vv =\ff 1 \ll \Big\{b_t(Y_t^\vv, P_t^*\mu^\vv)- b_t(X_t^\mu, P_t^*\mu)- \{\aa_{t_0}^{\vv}\}'(t)\Big\},\ \ t\in[0,t_0].\end{equation*}
Then  by \eqref{a2} and \eqref{XXZ}, we have
\beq\label{A5'} \lim_{\vv\downarrow 0} \ff 1 \vv(Y_t^\vv-X_t^\mu)= N_{t,t_0}^{\mu,\phi}.  \end{equation}
 Let
$$R^\vv:= \e^{\int_0^{t_0} \<\eta_t^\vv,\d W_t\> -\ff 1 2 \int_0^{t_0} |\eta_t^\vv|^2\d t}.$$
By \eqref{NB'}, we obtain
$$ P_{t_0}^{\tt W,0} f(X_0^{\mu^\vv}):=  \E^{\tt W,0}[f(X_{t_0}^{\mu^\vv})]= \E^{\tt W,0}[R^\vv f(X_{t_0}^\mu)],\ \ f\in \B_b(\R^{m+d}).$$
As in \eqref{*W2},  by {\bf (D)}, \eqref{A5'} and \eqref{a2}, we derive
\beq\label{KM} \beg{split}& \lim_{\vv\downarrow0} \ff{P_{t_0}^{\tt W,0}f(X_0^{\mu^\vv}) - P_{t_0}^{\tt W,0} f(X_0^\mu)}\vv= \lim_{\vv\downarrow 0}
\E^{\tt W,0} \Big[f(X_{t_0}^\mu) \ff{R^\vv-1}\vv\Big]\\
&= \frac{1}{\lambda} \E^{\tt W,0}\bigg[ f(X_{t_0}^\mu) \int_0^{t_0}  \Big\<\nn_{N_{s,t_0}^{\mu,\phi}}b_s(\cdot, P_s^*\mu)(X_s^\mu) + M_{s,t_0}^{\mu,\phi},\   \d W_s\Big\>\bigg].\end{split}\end{equation}
Finally, similarly to the proof of  \eqref{*W}, since {\bf (D)} implies {\bf (C)} for $(\scr P_k(\R^{m+d}),\W_k)$ replacing $(\scr P_2(\R^{m+d}),\W_2)$,
the argument leading to  \eqref{YPP}  implies
$$P_{t_0}^{\tt W,0} \log f(X_{0}^\nu)- \log P_{t_0}^{\tt W,0} f(X_0^\mu)
 \le c(t_0)  \Big\{\W_k(\mu,\nu)^2  +    \rr_{t_0}(X_0^\mu,X_0^\nu)^2+  \sup_{t\in [0,t_0]} |\xi_t^\nu-\xi_t^\mu|^2\Big\} $$
 for some constant $c(t_0)>0$.
Therefore, as shown in  step (b) of the proof of Theorem \ref{TA2'},
  this enables us to apply the dominated convergence theorem  with \eqref{KM} to derive
 \beg{align*} &D_\phi^IP_{t_0}f(\mu) = \lim_{\vv\downarrow 0} \ff{\E[P_{t_0}^{\tt W,0} f(X_0^{\mu^\vv})- P_{t_0}^{\tt W,0} f(X_0^\mu)]}\vv=   \E\Big\{\lim_{\vv\downarrow 0}
\E^{\tt W,0} \Big[f(X_{t_0}^\mu) \ff{R^\vv-1}\vv\Big]\Big\} \\
&= \ff 1\ll \E\bigg[f(X_{t_0}^{\mu})   \int_0^{t_0}  \Big\<\nn_{N_{s,t_0}^{\mu,\phi}}b_s(\cdot, P_s^*\mu)(X_s^\mu) + M_{s,t_0}^{\mu,\phi},\   \d W_s\big\>  \bigg].  \end{align*}
 \end{proof}

\section{Exponential ergodicity in entropy}

Following the line of \eqref{RW}, we may use the log-Harnack inequality to study the exponential ergodicity in entropy. To this end, we consider the time homogeneous equation on $\R^d$
\beq\label{EES} \d X_t= b(X_t, \L_{X_t})\d t+\ll\d W_t+  \tt\si(\L_{X_t})\d \tt W_t, \ \ t\ge 0,\end{equation}
and the degenerate model on $\R^{m+d}$
\beq\label{EES2}
\begin{cases}
\d X_t^{(1)}=\big\{AX^{(1)}_t+MX_t^{(2)}\big\}\d t, \\
\d X_t^{(2)}=b(X_t,\L_{X_t})\d t+\ll\d W_t+  \tt\si(\L_{X_t})\d \tt W_t,\ \ t\ge 0,
\end{cases}
\end{equation}where $\ll>0$ is a constant.

\subsection{Non-degenerate case}
 \begin{enumerate}
\item[\bf{(E)}] There exist  constants $K, \theta_1,\theta_2>0$ with $\theta:=\theta_2-\theta_1>0$,    such that for any $\mu,\nu\in \scr P_2(\R^d)$ and $x,y\in \R^d$,
\begin{equation*}\begin{split}
 & |b(x,\mu)-b(y,\nu)|+|\tt \sigma (\mu)-\tt \sigma (\nu)|\leq K (|x-y|+\W_2(\mu,\nu)),\\
 & 2\<b(x,\mu)-b(y,\nu),x-y\>+ \|\sigma(\mu)-\sigma(\nu)\|^2_{HS}
 \leq -\theta_2|x-y|^2+\theta_1 \W_2(\mu,\nu)^2,
\end{split}\end{equation*}where $\|\cdot\|_{HS}$ is the Hilbert-Schmidt norm.
\end{enumerate}
By \cite[Theorem 2.1]{FYW1}, this assumption implies that \eqref{EES} is well-posed for distributions in $\scr P_2$, and $P_t^*$ has a unique invariant probability measure   $\bar\mu\in \scr P_2(\R^d)$ such that
\beq\label{EXP} \W_2(P_t^*\mu,\bar\mu)^2\le \e^{-\theta t} \W_2(\mu,\bar\mu)^2,\ \ t\ge 0.\end{equation}
The following result ensures the exponential convergence in entropy.

\begin{thm}\label{EXPEN}
Assume {\bf (E)} and let $P_t^*$ be associated with $\eqref{EES}$. Then there exists a constant $c>0$ such that
 \begin{align*}&\max\big\{\W_2(P_t^\ast\mu, \bar\mu)^2,\mathrm{Ent}(P_t^\ast\mu|\bar\mu)\big\}\\
&\leq c\e^{-\theta t}\min\big\{\W_2(\mu, \bar\mu)^2,\mathrm{Ent}(\mu|\bar\mu)\big\},\ \ \mu\in\scr P_2(\R^d),t\ge 1.\end{align*}
 \end{thm}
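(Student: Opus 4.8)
The plan is to combine the exponential contraction in $\W_2$ from \eqref{EXP} with the log-Harnack inequality of Theorem \ref{Loh} in the by-now-standard two-step fashion: use the Wasserstein decay to get close to equilibrium, then convert the remaining Wasserstein distance into a relative entropy bound, and finally exploit stationarity of $\bar\mu$ to iterate and absorb all time into an exponential. First I would fix $\mu\in\scr P_2(\R^d)$ and $t\ge 1$, write $t=(t-1)+1$, and apply the Markov property $P_t^*\mu=P_1^*(P_{t-1}^*\mu)$. Applying Theorem \ref{Loh} with time $1$ to the pair $P_{t-1}^*\mu$ and $\bar\mu=P_{t-1}^*\bar\mu$ (using invariance of $\bar\mu$) gives
\[
\Ent(P_t^*\mu\,|\,\bar\mu)=\Ent\big(P_1^*(P_{t-1}^*\mu)\,\big|\,P_1^*\bar\mu\big)\le c\,\W_2(P_{t-1}^*\mu,\bar\mu)^2\le c\,\e^{-\theta(t-1)}\W_2(\mu,\bar\mu)^2,
\]
where the last step is \eqref{EXP}. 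Together with \eqref{EXP} itself for the $\W_2$ part, this already yields
\[
\max\{\W_2(P_t^*\mu,\bar\mu)^2,\Ent(P_t^*\mu|\bar\mu)\}\le c\,\e^{-\theta t}\W_2(\mu,\bar\mu)^2,\qquad t\ge 1,
\]
after adjusting $c$ by the harmless factor $\e^{\theta}$.

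The remaining point is to replace $\W_2(\mu,\bar\mu)^2$ on the right by $\min\{\W_2(\mu,\bar\mu)^2,\Ent(\mu|\bar\mu)\}$, i.e.\ to also control everything by the \emph{initial} entropy. For this I would invoke the Talagrand-type transportation inequality: since $\bar\mu$ is the invariant measure of a non-degenerate diffusion with the dissipativity in {\bf (E)}, it satisfies a log-Sobolev inequality and hence $\W_2(\mu,\bar\mu)^2\le C\,\Ent(\mu|\bar\mu)$ for all $\mu\in\scr P_2(\R^d)$. Plugging this into the two displays above turns the bound into $c\,\e^{-\theta t}\Ent(\mu|\bar\mu)$ as well, and taking the minimum of the two right-hand sides gives the claimed estimate. (If one prefers not to appeal to a log-Sobolev inequality for $\bar\mu$, an alternative is to first observe that $\Ent(P_s^*\mu|\bar\mu)<\infty$ for every $s>0$ by Theorem \ref{Loh}, so one may as well start the clock at a small positive time and use $\Ent(P_s^*\mu|\bar\mu)\le \frac cs\W_2(\mu,\bar\mu)^2$ only when $\Ent(\mu|\bar\mu)$ happens to be infinite; but the transportation-cost route is cleaner.)

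I expect the only genuine subtlety — the ``hard part'' — to be the justification of $\W_2(\mu,\bar\mu)^2\le C\,\Ent(\mu|\bar\mu)$, i.e.\ verifying that $\bar\mu$ enjoys a Talagrand $\mathbf T_2$ inequality. Under {\bf (E)} the frozen coefficients $b(\cdot,\bar\mu)$, $\si(\bar\mu)$ give a classical SDE whose drift is dissipative at rate $\theta_2$ outside a compact set and whose diffusion is uniformly elliptic (bounded above and, via $\si\si^*\ge\ll^2 I_d$ implicit in the construction of $\tt\si$, below); such invariant measures are well known to satisfy a log-Sobolev and hence Talagrand inequality. One must be slightly careful that $\bar\mu$ is the invariant law of the \emph{McKean--Vlasov} dynamics, not of the frozen one, but since they coincide as fixed points this is immaterial. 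Everything else — the Markov property, invariance of $\bar\mu$, the choice to split off a unit time interval, and the final minimum — is routine bookkeeping.
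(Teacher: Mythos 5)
Your argument is exactly what the paper does, just spelled out: the paper cites the proof of \cite[Theorem 2.3]{RW} for the Talagrand inequality $\W_2(\mu,\bar\mu)^2\le c_1\,\Ent(\mu|\bar\mu)$ and then invokes \cite[Theorem 2.1]{RW}, whose content is precisely your three-step derivation (split $P_t^*=P_1^*P_{t-1}^*$, apply the entropy-cost form of Theorem \ref{Loh} over the unit interval, then apply \eqref{EXP}). Two small remarks on the parts you flagged. First, your parenthetical ``alternative'' to the Talagrand inequality is not actually an alternative: when $\Ent(\mu|\bar\mu)=\infty$ the minimum on the right-hand side is $\W_2(\mu,\bar\mu)^2$ and there is nothing further to prove, and when it is finite you still must bound some $\W_2(P_s^*\mu,\bar\mu)^2$ by the initial entropy, which is precisely the Talagrand inequality — so the transportation-cost input is unavoidable, not merely cleaner. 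Second, your heuristic justification of the $T_2$-inequality (log-Sobolev for the frozen SDE with drift $b(\cdot,\bar\mu)$) is not quite how the cited proof goes, since that drift is not of gradient form in general and $\bar\mu$ need not be its explicit Gibbs measure; the proof of \cite[Theorem 2.3]{RW} establishes the Talagrand inequality by a different route under dissipativity conditions of the type in {\bf (E)}. Neither remark affects the validity of your main argument, which is correct and matches the paper's.
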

\begin{proof} According to the proof of \cite[Theorem 2.3]{RW}, {\bf (E)} implies   the Talagrand inequality
$$\W_2(\mu, \bar \mu)^2\leq c_1\mathrm{Ent}(\mu|\bar\mu), \ \ \mu\in\scr P_2(\R^d) $$ for some constant $c_1>0$.
According to   \cite[Theorem 2.1]{RW}, this together with  \eqref{EXP} and Theorem \ref{Loh} implies the desired assertion.

\end{proof}

\subsection{Degenerate case}

To study the exponential ergodicity for the degenerate model \eqref{EES2}, we extend the assumption $(A1)$-$(A3)$ in \cite{W17} to the present distribution dependent case.

\beg{enumerate} \item[\bf{(F)}]  $\tt\si$ and $b$ are Lipschitz continuous on $\scr P_2(\R^{m+d})$ and $\R^{m+d} \times \scr P_2(\R^{m+d})$ respectively. $(A,M)$ satisfies the rank condition \eqref{RRS} for some $1\le l\le m$,   and there exist constants $r>0, \theta_2>\theta_1\ge 0$ and $r_0\in (-\|M\|^{-1}, \|M\|^{-1})$ such that
\beg{align*} &\ff 1 2 \|\tt\si(\mu)-\tt\si(\nu)\|_{HS}^2 + \big\<b(x,\mu)-b(y,\nu),\ x^{(2)}-y^{(2)}+rr_0M^*(x^{(1)}-y^{(1)})\big\>\\
& +\big\<r^2(x^{(1)}-y^{(1)})+ rr_0M(x^{(2)}-y^{(2)}),\ A(x^{(1)}-y^{(1)})+ M(x^{(2)}-y^{(2)})\big\>\\
&\le \theta_1\W_2(\mu,\nu)^2-\theta_2|x-y|^2,\ \ x,y\in \R^{m+d},\ \mu,\nu\in \scr P_2(\R^{m+d}).\end{align*}\end{enumerate}
In the distribution free case, some examples are presented in \cite[Section 5]{W17}, which can be extended to the present setting if the Lipschitz constant of $\tt\si(\mu)$ and $b(x,\mu)$
in $\mu\in \scr P_2(\R^{m+d})$ is small enough.

\beg{thm} Assume {\bf (F)}. Then $P_t^*$ associated with $\eqref{EES2}$ has a unique  invariant probability measure $\bar\mu$, and there exist constants $c,\ll>0$ such that
$$\max\big\{\Ent(P_t^*\mu|\bar\mu), \W_2(P_t^*\mu,\bar\mu)^2\big\}\le c\e^{-\ll t} \W_2(\mu,\bar\mu)^2,\ \ t\ge 1, \mu\in \scr P_2(\R^{m+d}).$$
\end{thm}
\beg{proof} Let
$$\rr(x):= \ff{r^2} 2 |x^{(1)}|^2 +\ff 1 2 |x^{(2)}|^2 + rr_0 \<x^{(1)}, Mx^{(2)}\>,\ \ x=(x^{(1)},x^{(2)})\in \R^{m+d}.$$
By $r_0\|M\|<1$ and $r>0$, we find a constant $c_0\in (0,1) $ such that
\beq\label{N0} c_0   |x|^2 \le \rr(x)\le c_0^{-1} |x|^2,\ \ x\in\R^{m+d}.\end{equation}
Let $X_t$ and $Y_t$ solve \eqref{EES2} with initial values
\beq\label{N1} \L_{X_0}= \mu,\ \ \L_{Y_0}=\nu,\ \ \W_2(\mu,\nu)^2=\E[|X_0-Y_0|^2].\end{equation}
By {\bf (F)} and It\^o's formula, we obtain
\begin{align}\label{rhs}\d \rr (X_t-Y_t)\le \Big\{\theta_1 \W_2(P_t^*\mu,P_t^*\nu)^2 -\theta_2|X_t-Y_t|^2\Big\}\d t +\d M_t
\end{align}
for some martingale $M_t$,
 and
$$\d \rr (X_t)\le \Big\{\theta_1 \E[|X_t|^2] -\theta_2|X_t|^2+C+C|X_t|\Big\}\d t +\d \tilde{M}_t$$ for some martingale $\tilde{M}_t$ and constant  $C>0.$ In particular,
by \eqref{N0}, the latter implies
\beq\label{N*} \sup_{t\ge 0}   \E[|X_t|^2] <\infty.\end{equation}
 Since
\beq\label{N3}   \W_2(P_t^*\mu,P_t^*\nu)^2\le \E[|X_t-Y_t|^2],\end{equation} \eqref{rhs} and \eqref{N0} imply
$$\E[\rr(X_t-Y_t)]- \E[\rr(X_s-Y_s)]\le -c_0(\theta_2-\theta_1) \int_s^t \E[\rr(X_r-Y_r)]\d r,\ \ t\ge s\ge 0.$$
By Gronwall''s inequality, we derive
$$ \E[\rr(X_t-Y_t)]\le \e^{-c_0(\theta_2-\theta_1)t} \E[\rr(X_0-Y_0)],\ \ t\ge 0.$$
This together with \eqref{N0},  \eqref{N1} and \eqref{N3} yields
 \beg{align*} &\W_2(P_t^*\mu,P_t^*\nu)^2\le \E[|X_t-Y_t|^2] \le c_0^{-1}  \E[\rr(X_t-Y_t)]\le c_0^{-1}  \e^{-c_0(\theta_2-\theta_1)t} \E[\rr(X_0-Y_0)]\\
&\le c_0^{-2}  \e^{-c_0(\theta_2-\theta_1)t} \E[|X_0-Y_0|^2]= c_0^{-2}  \e^{-c_0(\theta_2-\theta_1)t}\W_2(\mu,\nu)^2,\ \ t\ge 0,\mu,\nu\in \scr P_2(\R^{m+d}).\end{align*}
As shown in \cite[Proof of Theorem 3.1(2)]{FYW1}, this together with \eqref{N*} implies that    $P_t^*$ has a unique invariant probability measure   $\bar\mu\in\scr P_2(\R^d)$,
and
\beq\label{N4} \W_2(P_t^*\mu,\bar\mu)^2\le c_0^{-2}  \e^{-c_0(\theta_2-\theta_1)t}\W_2(\mu,\bar\mu)^2,\ \ t\ge 0,\mu  \in \scr P_2(\R^{m+d}). \end{equation}

Finally,
by the log-Harnack inequality  \eqref{EC2}, there exists a constant $c_1>0$ such that
$$\Ent(P_1^*\mu|\bar \mu)\le c_1 \W_2(\mu,\bar \mu)^2.$$
Combining this with \eqref{N4} and using the semigroup property $P_{t}^*= P_{t-1}^*P_1^*$ for $t\ge 1$, we finish the proof.
\end{proof}

When $b$ is of a gradient type (induced by $\si$) as in \cite[(2.21)]{RW} such that the invariant probability measure $\bar\mu$ is explicitly given and satisfies
the Talagrand inequality, we may also derive the stronger upper bound  as in Theorem \ref{EXPEN}. We skip the details.

 \beg{thebibliography}{99}

\bibitem{ATW} M. Arnaudon, A. Thalmaier, F.-Y. Wang,  \emph{Gradient estimates and Harnack inequalities on non-compact Riemannian manifolds, } Stochastic Process. Appl. 119(2009), 3653-3670.
\bibitem{BH} Y. Bai, X. Huang, \emph{Log-Harnack inequality and exponential ergodicity for distribution dependent CKLS and Vasicek Model,} J. Theoret. Probab. (2022). https://doi.org/10.1007/s10959-022-01210-z.
\bibitem{B} D. Ba\~{n}os, \emph{The Bismut-Elworthy-Li formula for mean-field stochastic differential equations,} Ann. Inst. Henri Poincar\'{e} Probab. Stat. 54 (2018) 220-233.
 \bibitem{BRW} J. Bao, P. Ren, F.-Y. Wang, \emph{Bismut formula for Lions derivative of distribution-path dependent SDEs,} J. Differential Equations 282(2021), 285-329.
\bibitem{BBP} M. Bauer, T. M-Brandis, F. Proske, \emph{Strong solutions of mean-field stochastic differential equations with irregular drift,} Electron. J. Probab. 23(2018), 1-35.
 \bibitem{Bim} J. M. Bismut, \emph{Large Deviations and the Malliavin Calculus,} Boston: Birkh\"{a}user, MA, 1984.
\bibitem{BRS} V. I. Bogachev, M. R\"{o}ckner, S. V. Shaposhnikov,
\emph{Distances between transition probabilities of diffusions and applications to nonlinear Fokker-Planck-Kolmogorov equations,} J. Funct. Anal. 271 (2016), 1262-1300.
\bibitem{CM} D. Crisan, E. McMurray, \emph{Smoothing properties of McKean-Vlasov SDEs,}Probab. Theory Relat. Fields 171(2018), 97-148.
\bibitem{EL}  K. D.   Elworthy, X.-M. Li, \emph{Formulae for the derivatives of heat semigroups,} J. Funct. Anal. 125(1994), 252-286.
\bibitem{GW}A. Guillin, F.-Y. Wang, \emph{Degenerate Fokker-Planck equations: Bismut formula, gradient estimate and Harnack inequality,} J. Differential Equations 253 (2012) 20-40.
\bibitem{HRW} X. Huang, P. Ren, F.-Y. Wang, \emph{Distribution dependent stochastic differential equations, } Front. Math. China 16(2021), 257-301.
\bibitem{HRW19} X. Huang, M. R\"{o}ckner, F.-Y. Wang, \emph{Non-linear Fokker--Planck equations for probability measures on path space and path-distribution dependent SDEs,}   Discrete Contin. Dyn. Syst. 39(2019), 3017-3035.
\bibitem{HS} X. Huang, Y. Song, \emph{Well-posedness and regularity for distribution dependent SPDEs with singular drifts,} Nonlinear Anal. 203(2021), 112167.



\bibitem{HW18} X. Huang, F.-Y. Wang, \emph{Distribution dependent SDEs with singular coefficients,} Stochastic Process. Appl. 129(2019), 4747-4770.


\bibitem{HW22a} X. Huang, F.-Y. Wang, \emph{Log-Harnack inequality and Bismut formula for singular McKean-Vlasov SDEs,} arXiv:2207.11536.
\bibitem{HW21} X. Huang, F.-Y. Wang, \emph{Derivative estimates on distributions of McKean-Vlasov SDEs,} Electron. J. Probab. 26(2021), 1-12.






\bibitem{PW} E. Priola, F.-Y. Wang, \emph{Gradient estimates for diffusion semigroups with singular coefficients,} J. Funct. Anal. 236(2006), 244-264.

\bibitem{RW19} P. Ren, F.-Y. Wang, \emph{Bismut formula for Lions derivative of distribution dependent SDEs and applications,} J. Differential Equations 267(2019), 4745-4777.
\bibitem{RW}  P. Ren, F.-Y. Wang, \emph{Exponential convergence in entropy and Wasserstein for McKean-Vlasov SDEs,}  Nonlinear Anal. 206(2021), 112259.
\bibitem{RW0} P. Ren, F.-Y. Wang, \emph{Derivative formulas in measure on Riemannian manifolds,} Bull. Lond. Math. Soc. 53(2021), 1786-1800.
\bibitem{S} T. Seidman,\emph{ How violent are fast controls?}  Math. Control Signals Systems  1(1988), 89-95.


\bibitem{Song} Y. Song, \emph{Gradient estimates and exponential ergodicity for mean-field SDEs with jumps,} J. Theoret. Probab. 33(2020),201-238.

\bibitem{T} M. Tahmasebi, \emph{The Bismut-Elworthy-Li formula for semi-linear distribution-dependent SDEs driven by fractional Brownian motion,} arXiv:2209.05586.
\bibitem{W97} F.-Y. Wang,    \emph{Logarithmic Sobolev inequalities on noncompact Riemannian manifolds,} Probab. Theory Related Fields 109(1997),417-424.
\bibitem{W10} F.-Y. Wang,  \emph{Harnack inequalities on manifolds with boundary and applications,} J. Math. Pures Appl. 94(2010), 304-321.
\bibitem{Wbook} F.-Y. Wang, \emph{Harnack Inequality for Stochastic Partial Differential Equations,} Springer, New York, 2013.
\bibitem{W17} F.-Y. Wang, \emph{Hypercontractivity and applications for stochastic Hamiltonian systems, } J. Funct. Anal. 272(2017), 5360-5383.
\bibitem{FYW1} F.-Y. Wang, \emph{Distribution-dependent SDEs for Landau type equations,} Stochastic Process. Appl. 128(2018), 595-621.


\bibitem{FYW3} F.-Y. Wang, \emph{Derivative Formula for Singular McKean-Vlasov SDEs,} Commun. Pure Appl. Anal. 22(2023),  1866-1898.
\bibitem{WZ1} F.-Y. Wang, X. Zhang, \emph{Derivative formula and applications for degenerate diffusion semigroups,} J. Math. Pures Appl. 99(2013),726-740.








\end{thebibliography}

\end{document}